\documentclass[9pt,twoside]{pnas-new-mac}
\templatetype{pnasmathematics}

\usepackage[T1]{fontenc}
\usepackage[utf8]{inputenc}
\usepackage{amsmath,amsfonts,amsthm,amssymb,amsxtra,bbm}
\usepackage{fourier,setspace,graphicx,color,pdflscape}
\usepackage[colorlinks=true]{hyperref}
\hypersetup{urlcolor=blue, linkcolor=blue, citecolor=red, anchorcolor=blue]}

\newtheorem{theorem}{Theorem}
\newtheorem{proposition}[theorem]{Proposition}
\newtheorem{lemma}[theorem]{Lemma}
\newtheorem{corollary}[theorem]{Corollary}

\newcommand{\R}{{\mathbb R}}
\newcommand{\C}{{\mathbb C}}
\newcommand{\N}{{\mathbb N}}

\newcommand{\be}[1]{\begin{equation}\label{#1}}
\newcommand{\ee}{\end{equation}}
\renewcommand{\(}{\left(}
\renewcommand{\)}{\right)}
\newcommand{\irdx}[1]{\int_{\R^d}{#1}\,dx}
\newcommand{\irdv}[1]{\int_{\R^d}{#1}\,dv}
\newcommand{\irdxv}[1]{\iint_{\R^d\times\R^d}{#1}\,dx\,dv}
\newcommand{\nrm}[2]{\|{#1}\|_{#2}}
\newcommand{\Nrm}[2]{\left\|{#1}\right\|_{#2}}
\newcommand{\nrmx}[2]{\nrm{#1}{\mathrm L^{#2}(\R^d)}}
\newcommand{\nrmv}[2]{\nrm{#1}{\mathrm L^{#2}(\R^d)}}
\newcommand{\nrmxv}[2]{\nrm{#1}{\mathrm L^{#2}(\R^d\times\R^d)}}
\newcommand{\Nrmxv}[2]{\Nrm{#1}{\mathrm L^{#2}(\R^d\times\R^d)}}
\newcommand{\gammastar}{\gamma_{\!\star}}
\newcommand{\rrho}{\mathrm R}

\makeatletter
\@namedef{subjclassname@2020}{%
\textup{2020} Mathematics Subject Classification}
\makeatother

\definecolor{darkgreen}{rgb}{.0,.4,.2}

\usepackage{etoolbox}
\newcounter{taggedeq}
\pretocmd{\equation}{\stepcounter{taggedeq}}{}{}

\setboolean{displaywatermark}{false}

\RequirePackage{silence}
\nonstopmode

\title{The fundamental solution of a nonlinear kinetic Fokker-Planck equation}

\author[a]{Giovanni Brigati}
\author[b,c]{Guillaume Carlier}
\author[b,1]{Jean Dolbeault}

\affil[a]{ISTA, Institute of Science and Technology Austria, Am Campus 1, Klosterneuburg, 3400, Austria}
\affil[b]{CEREMADE, Centre de Recherche en Math\'ematiques de la D\'ecision (CNRS UMR n$^\circ$~7534), PSL University, Universit\'e Paris-Dauphine, Place de Lattre de Tassigny, 75775, Paris 16, France}
\affil[c]{Inria-Paris, MOKAPLAN, Paris, France}

\leadauthor{Giovanni Brigati}

\significancestatement{
The sixth Hilbert problem addresses the mathematical foundations of the laws of physics and specifically the question of the diffusion limits in kinetic theory. Kolmogorov exhibited a fundamental solution of the Vlasov-Fokker-Planck equation which plays a crucial role in the hypoellipticity theory and the study of large time asymptotics (hypocoercivity) in linear evolution equations. In the spatially homogeneous case, the self-similar function found by Barenblatt and Pattle extends the notion of fundamental solution to nonlinear diffusions. We obtain such a fundamental solution for a nonlinear kinetic equation and draw some important consequences for large time asymptotics. Using pressure variables, the expressions are unexpectedly simple, thus paving the road to diffusion limits, hypocoercivity methods and sharp estimates in nonlinear kinetic theory.}
\correspondingauthor{\textsuperscript{1} E-mail: dolbeaul@ceremade.dauphine.fr}

\keywords{Fundamental solution $|$ Nonlinear kinetic Fokker-Planck equation $|$ Self-similar solutions $|$ Fast diffusion $|$ Porous media $|$ Maximum Principle $|$ Intermediate asymptotics $|$ Entropy methods $|$ Diffusion limit}

\begin{abstract}
This paper is devoted to a fundamental solution of a nonlinear kinetic equation involving a porous medium or fast diffusion operator acting on velocities. Such a nonlinearity has interesting scaling properties, which result in a self-similar behaviour of the fundamental solution. Here fundamental solution means a Dirac distribution initial datum which moreover governs the large time asymptotics of a large class of solutions. Using a self-similar change of variables, the equation becomes a nonlinear kinetic Fokker-Planck equation with harmonic confinement and the intermediate asymptotics regime is transformed into a stability property of a special stationary solution, which attracts the solutions for large times. In the homogeneous case (pure nonlinear diffusion), the problem is reduced to a classical nonlinear diffusion equation with Barenblatt-Pattle self-similar profiles. Unexpectedly, this beautiful structure is preserved at kinetic level, with remarkable consequences for relative entropy estimates, detailed intermediate asymptotics and nonlinear diffusion limits in adapted functional spaces.
\end{abstract}

\dates{\today}

\begin{document}

\maketitle
\thispagestyle{firststyle}
\ifthenelse{\boolean{shortarticle}}{\ifthenelse{\boolean{singlecolumn}}{\abscontentformatted}{\abscontent}}{}

\firstpage{5}


\section{Introduction and main results}\label{Sec:Intro}

The computation of the Green function for the linear Vlasov-Fokker-Planck equation by A.~Kolmogorov in~\cite{kolmogoroff_zufallige_1934} was the starting point of the hypo-ellipticity theory developed later by L.~H\"ormander in~\cite{MR222474}. Barenblatt-Pattle self-similar solutions of \cite{MR46217,MR114505} play the role of \emph{fundamental solutions} for the porous medium and the fast diffusion equations (see for instance~\cite{vazquez2023surveymassconservationrelated}), in the sense that they have Dirac distributions as initial data and govern the large time behaviour of finite mass nonnegative solutions in the appropriate range of parameters. Over the years, these solutions became an essential tool in the theory of nonlinear diffusion equations, see~\cite{MR2282669,MR2286292}. Our purpose is to extend this notion of fundamental solution to a class of nonlinear kinetic equations and draw some consequences.

Let us consider the nonlinear kinetic equation
\be{Eqn:f}
\partial_tf+v\cdot\nabla_xf=\Delta_vf^m\,,\quad(t,x,v)\in\R^+\times\R^d\times\R^d\,.
\ee
We generalise to the case $m\neq1$ a few results known for $m=1$. Inspired by the Green function obtained by A.~Kolmogorov in~\cite{kolmogoroff_zufallige_1934} in the linear case $m=1$, and the Barenblatt-Pattle functions in the homogeneous case (\emph{i.e.}, when $f$ does not depend on $x$ and solves a nonlinear diffusion equation), we look for self-similar solutions of \eqref{Eqn:f} with finite mass and Dirac distribution $\boldsymbol\delta$ as initial data. Let us introduce the \emph{pressure} variable
\[
\mathsf P:=\frac m{1-m}\,f^{m-1}
\]
and, on the support of $f$, rewrite \eqref{Eqn:f} as
\be{eq:P}
\partial_t\mathsf P=(1-m)\,\mathsf P\,\Delta_v\mathsf P-\,|\nabla_v\mathsf P|^2-v\cdot\nabla_x\mathsf P\,.
\ee
The differential operator in the right-hand side preserves second order polynomials in~$(x,v)$ with $t$-dependent coefficients. An elementary computation shows that
\[
\mathsf P_\star(t,x,v)=\beta(t)+\frac{(1+A)}{2\,(1-A)\,t}\(\left|v-\frac x{(1-A)\,t}\right|^2+A\,\left|\frac x{(1-A)\,t}\right|^2\)\quad\mbox{with}\quad A=\frac{1+d-d\,m}{3-d+d\,m}
\]
is a solution of \eqref{eq:P}. If
\be{Range}
m\in(m_1,1)\cup(1,m_2)\quad\mbox{with}\quad m_1:=1-\frac1d\,,\quad m_2:=1+\frac1d\,,
\ee
the corresponding \emph{fundamental solution} of \eqref{Eqn:f} is given by
\be{fstar}
f_\star(t,x,v)=\(\tfrac{1-m}m\,\mathsf P_\star(t,x,v)\)_+^\frac1{m-1}\,.
\ee
\newpage\setboolean{@twocolumn}{true}
\let\FPeverypar\everypar
	\newtoks\everypar
	\newtoks\listeverypar
	\setlength{\columnwidth}{0.5\textwidth}
	\setlength{\linewidth}{0.45\textwidth}
	\setlength{\hsize}{0.45\textwidth}
	\setlength{\parskip}{0pt}\FPeverypar{\the\everypar\the\listeverypar\stepcounter{parcount}
	\ifnum\theparcount=0\relax\else\ifnum\theparcount>0\ifnum\theparcount=#2\normalcolwidth\adjlinewidth{#1}{0}{0.45}\fi\fi\fi}
\renewcommand{\pnaspar}{\parfillskip=0pt\eject\clearpage\setlength{\columnwidth}{\linewidth}\setlength{\columnwidth}{242pt}
	\setlength{\linewidth}{242pt}\setlength{\hsize}{242pt}\parfillskip=0pt plus 1fil\noindent}
\renewcommand{\parabreak}{\vfill\eject\clearpage\setlength{\columnwidth}{\linewidth}\setlength{\columnwidth}{242pt}
	\setlength{\linewidth}{242pt}\setlength{\hsize}{242pt}\parfillskip=0pt plus 1fil}
\renewcommand{\normalcolwidth}{\setlength{\columnwidth}{\linewidth}\setlength{\columnwidth}{242pt}\setlength{\linewidth}{242pt}
	\setlength{\hsize}{242pt}}
\noindent Here $\beta(t)=\big((1-A)\,t\big)^{2\,(1-m)/(m-m_1)}\,\gammastar$ is an explicit normalization constant (see~\eqref{Eq:Mass}). We have not found such a solution in the current literature, but it is straightforward to check that one recovers Kolmogorov's solution~\cite{kolmogoroff_zufallige_1934} in the limit as $m\to1$.

Our main result is that the \emph{fundamental solution} $f_\star$ is an interesting special solution which governs the large time behaviour of all solutions of \eqref{Eqn:f} with initial datum $f_0$ such that,
\be{Initial:f}
\begin{aligned}
&f_0\in\mathrm L^1(\R^d\times\R^d)\,,\quad\nrmxv{f_0}1=1\,,\\
&g_{\gamma_1}(x,v-x)\le f_0(x,v)\le g_{\gamma_2}(x,v-x)
\end{aligned}
\ee
for any $(x,v)\in\R^d\times\R^d$, with $(1-m)\,\gamma_2> (1-m)\, \gamma_1>0$ and
\be{g}
g_\gamma(x,v):=\(\tfrac{1-m}m\,\(\gamma+\tfrac{1+A}2\,|v|^2+\tfrac{1+A}2\,A\;|x|^2\)\)_+^\frac1{m-1}\,.
\ee
In simpler words, we assume that~$f_0$ is nonnegative, of mass~$1$, and trapped between two self-similar profiles of masses $M_i=\irdxv{g_{\gamma_i}(x,v)}$ with \hbox{$M_1<1<M_2$}.
\begin{theorem}\label{Thm:Main} Assume that $d\ge1$ and $m$ satisfies~(\ref{Range}), with the additional condition that $1/2<m<3/2$ when $d=1$. Then
\[
\lim_{t\to+\infty}\Nrmxv{f(t,\cdot,\cdot)-f_\star(t,\cdot,\cdot)}1=0
\]
if $f$ solves \eqref{Eqn:f} with an initial datum $f_0$ satisfying \eqref{Initial:f}.
\end{theorem}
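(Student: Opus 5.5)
We pass to self-similar variables adapted to the scaling of $f_\star$. We set $s=\log t$ (up to a shift and the factor $1-A$), rescale $v$ by $t^{\alpha}$ and $x$ by $t^{\alpha+1}$, and normalise the amplitude so that mass is preserved. With the exponent $\alpha$ chosen so that $\beta(t)$ and the quadratic part of $\mathsf P_\star$ scale in the same way, the equation \eqref{Eqn:f} becomes a nonlinear kinetic Fokker-Planck equation with a linear drift and harmonic confinement. Schematically, it reads
\[
\partial_s g+v\cdot\nabla_x g - c_1\,x\cdot\nabla_x g - c_2\,\nabla_v\cdot(v\,g)=\Delta_v g^m
\]
plus lower order corrections. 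In these variables $f_\star$ becomes a stationary solution $g_{\gammastar}(x,v-x)$, and the profiles $g_{\gamma}(x,v-x)$ are stationary too. The key algebraic point is to check that, in the pressure formulation \eqref{eq:P}, every function of the family $\gamma+\frac{1+A}2|v-x|^2+\frac{1+A}2A|x|^2$ is an exact stationary solution of the rescaled equation for every constant $\gamma$. This holds because the operator preserves quadratic polynomials and the constant $\gamma$ enters only additively. With this, assumption \eqref{Initial:f} says that the rescaled initial datum lies between two stationary solutions of masses $M_1<1<M_2$.

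Next we invoke a comparison principle for the rescaled equation, which is degenerate parabolic in $v$ and of transport type in $x$. This gives $g_{\gamma_1}(x,v-x)\le g(s,x,v)\le g_{\gamma_2}(x,v-x)$ for all $s\ge0$. These uniform bounds give tightness, uniform $\mathrm L^\infty$ control and a common compact support when $m>1$ (or common algebraic tails when $m<1$). In the fast diffusion case, the restriction $m>m_1$, together with $m>1/2$ when $d=1$, is exactly what makes $g_{\gamma_2}$ integrable in $\R^{2d}$ and what makes the relative entropy below finite.

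To identify the limit, we use a Lyapunov functional. The natural candidate is the relative entropy
\[
\mathcal F[g]=\iint\Big(\tfrac{g^m-g_{\gammastar}^m-m\,g_{\gammastar}^{m-1}(g-g_{\gammastar})}{m-1}\Big)\,dx\,dv .
\]
Equivalently, it is the free energy $\iint\big(\frac{g^m}{m-1}+\Phi\,g\big)$ with $\Phi$ the quadratic pressure potential, which is nonincreasing along the flow. Its dissipation is only a velocity Fisher-information term $\iint g\,|\nabla_v(\frac m{m-1}g^{m-1}+\Phi)|^2$, so the dissipation is degenerate in $x$. The sandwich bounds make the set $\{g(s+\cdot)\}$ relatively compact in $\mathrm L^1_{\rm loc}$ in time with values in $\mathrm L^1$; in the fast diffusion case, compactness in $x$ comes from averaging lemmas or from the uniform bounds combined with the transport structure. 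We then run a LaSalle argument: any $\omega$-limit trajectory has zero dissipation, so it is a local equilibrium in $v$ of the form $g=(\frac{1-m}m(\gamma(s,x)+\Phi))_+^{1/(m-1)}$. Inserting this form into the equation forces $\gamma$ to be constant, which is where the transport term $v\cdot\nabla_x$ does the work, as in hypocoercivity. Mass $1$ then fixes $\gamma=\gammastar$. Finally, undoing the change of variables preserves the $\mathrm L^1$ norm, which gives the theorem.

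The main obstacle is this last rigidity step: showing that zero velocity dissipation together with the kinetic equation forces global equilibrium, and obtaining enough compactness in $x$ despite the degeneracy. First, we would exploit the explicit quadratic structure. Plugging the local-equilibrium ansatz into the pressure equation gives polynomial identities in $v$ whose coefficients force $\nabla_x\gamma=0$. Second, we would handle compactness via the comparison bounds and the uniform continuity in time coming from the equation.
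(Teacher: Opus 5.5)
Your overall architecture matches the paper's. It has the same ingredients: self-similar variables, a sandwich between stationary profiles via comparison, a relative entropy with velocity-only dissipation, identification of the limit as a local equilibrium, rigidity through the transport term, and mass normalisation. Your rigidity step is fine. Once an $\omega$-limit is known to be of the form $\big(\frac{1-m}m(\gamma(s,x)+\Phi)\big)_+^{1/(m-1)}$, the polynomial identity in $v$ gives $\partial_s\gamma=0$ and $\nabla_x\gamma=0$. The paper reaches the same conclusion by integrating in $v$ and then reading off $\nabla\mu_\infty$.

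\textbf{The gap is the compactness your LaSalle argument rests on.}
\begin{itemize}
\item The bounds $g_{\gamma_1}\le g\le g_{\gamma_2}$ give uniform $\mathrm L^\infty$ bounds, equi-integrability and tightness. That is only \emph{weak} compactness in $\mathrm L^1$.
\item Time equicontinuity from the equation holds only in a dual norm.
\item Averaging lemmas give compactness of velocity averages such as $\rho_g$, not of $g$ itself.
\end{itemize}
With weak limits alone the argument breaks at the nonlinear term. Vanishing dissipation does give $J_n:=\nabla_vg_n^m+(1+A)\,v\,g_n\to0$ in $\mathrm L^1$, by Cauchy--Schwarz against the mass. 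In the limit, however, this only yields $\nabla_v\overline{g^m}+(1+A)\,v\,g_\infty=0$, where $\overline{g^m}$ is the weak limit of $g_n^m$ and need not equal $g_\infty^m$. So $g_\infty$ is not identified as a local equilibrium. In any case, weak convergence could not give the $\mathrm L^1$-norm convergence the theorem claims.

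\textbf{What is missing is a mechanism that transfers compactness from $\rho_g$ to $g$.} The paper supplies it quantitatively. A velocity-only Gagliardo--Nirenberg (entropy--entropy production) inequality, combined with a Csisz\'ar--Kullback inequality, gives $\nrmv{g-\widetilde g}1^2\le\mathcal C^{-1}\rho_g^{2-k}\irdv{g\,|\nabla_v\mathsf Q-\nabla_v\mathsf Q_\star|^2}$. Here $\widetilde g$ is the local Barenblatt profile with the same density. Hence $\int_t^{t+1}\nrmxv{g-\widetilde g}1^2\,ds\to0$. Since $\widetilde g$ is a continuous function of $\rho_g$, the $\mathrm H^{1/4}$ averaging bound on (truncated) velocity averages yields strong convergence of $\widetilde g_n$, and therefore of $g_n$.

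This step is also where the extra conditions $1/2<m<3/2$ for $d=1$ enter. Your proposal never uses them, which signals the missing step. They have nothing to do with integrability, contrary to what you write. When $d=1$ we have $m_1=0$, so $g_\gamma\in\mathrm L^1$ for all $m\in(0,1)$ by Lemma~\ref{Lem:Kolmogorov}. The relative entropy is finite for every $m>m_1$ thanks to the sandwich.

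A possible fix within your framework is to extract $v$-regularity from the dissipation: $J_n\to0$ makes $\nabla_vg_n^m$ bounded in $\mathrm L^1_{\rm loc}$. You would then combine this with averaging lemmas to get compactness of $g_n$ itself. Some such argument must be provided; the comparison bounds and time continuity do not do it. Note also that the obstacle you single out, the rigidity step, is the easy part once strong convergence to a local equilibrium is available.
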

\noindent Since \eqref{Eqn:f} conserves mass for any $t\ge0$ but $f_\star(t,\cdot,\cdot)\to0$ as $t\to+\infty$ in $\mathrm L^1_{\rm loc}(\R^d\times\R^d)$, the result of Theorem~\ref{Thm:Main} is an \emph{intermediate asymptotics} estimate. The level lines of the \emph{fundamental solution}~$f_\star$ given by \eqref{fstar} are ellipses, which rotate and expand in the phase space $\R^d\times\R^d\ni(x,v)$ as $t$ increases. See Fig.~\ref{F1} for plots of the ellipse $\mathfrak E_m(t)$ which encloses half of the mass of~$f_\star$.

\medskip As in the study of porous media and fast diffusions equations, it is convenient to replace \emph{intermediate asymptotics} estimates by estimates of convergence to a stationary solution after a \emph{time-dependent rescaling}. The expression of $f_\star$ suggests to introduce \emph{self-similar variables} and consider $g$ such that
\be{SSCoV}
f(t,x,v)=R(t)^{-d\,(1+A)}\,g\(\log R(t),\frac x{R(t)},\frac v{R(t)^A}-\frac x{R(t)}\)
\ee
with
\be{R}
R(t)=\(R_0^{1-A}+(1-A)\,t\)^\frac1{1-A}\quad\mbox{and}\quad A=\frac{1+d-d\,m}{3-d+d\,m}
\ee
where $R_0=R(0)\ge0$, and $A=A(m)$ is the same as in the definition of $\mathsf P_\star$. With this change of variables, if $f$ is a solution of \eqref{Eqn:f}, then $g$ solves
\be{Eqn:g}
\partial_tg+v\cdot\nabla_xg-A\,x\cdot\nabla_vg=\Delta_vg^m+(1+A)\,\nabla_v\cdot(v\,g)\,.
\ee
If $R_0=1$, the initial datum is
\be{g0}
g(0,x,v)=g_0(x,v):=f_0(x,v+x)
\ee
if $f$ solves \eqref{Eqn:f} with initial datum $f_0$. It is also elementary to check that \eqref{Eqn:g} admits a nonnegative stationary solution $g_\gamma$ for some $\gamma\in\R$ such that $(1-m)\,\gamma>0$. Notice that the integrability of~$g_\gamma$ induces a restriction on $m$.

\begin{lemma}\label{Lem:Kolmogorov} Let $d\ge1$. Then $g_\gamma$ is a function in $\mathrm L^1(\R^d\times\R^d)$ if and only if $m$ satisfies~(\ref{Range}). In that range, \eqref{Eqn:f} admits a fundamental solution $f_\star$ given by \eqref{fstar}, \emph{i.e.}, by~(\ref{g}) with $\gamma=\gamma_\star$ and $g_\star:=g_{\gamma_\star}$ using \eqref{SSCoV} and \eqref{R} with $R_0=0$.\end{lemma}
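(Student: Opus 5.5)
The plan has three parts: an integrability computation for $g_\gamma$, a change of variables showing that $f_\star$ is $g_\star$ seen through \eqref{SSCoV}, and a check of the Dirac initial datum.

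\emph{Integrability.} I would treat the cases $m<1$ and $m>1$ separately. Put $Q(x,v):=\tfrac{1+A}2\,(|v|^2+A\,|x|^2)$. This is a positive definite quadratic form on $\R^{2d}$ as soon as $A>0$ (and $1+A>0$).

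\emph{Case $m>1$.} Here $\gamma<0$, and $g_\gamma=(\tfrac{m-1}m\,(|\gamma|-Q))_+^{1/(m-1)}$ is bounded and supported in the ellipsoid $\{Q\le|\gamma|\}$. It is therefore integrable as soon as that ellipsoid is bounded, i.e.\ as soon as $A>0$. For $A=(1+d-dm)/(3-d+dm)$ with $m>1$, the numerator is positive iff $m<m_2=1+1/d$, and the denominator is then automatically positive. If $A\le0$, the set $\{Q\le|\gamma|\}$ is unbounded in $x$ and $g_\gamma$ is not integrable. The sign cases for the denominator have to be tracked carefully; in particular, when $1+A<0$ the form $-Q$ is positive on a noncompact set.

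\emph{Case $m<1$.} Here $\gamma>0$ and $g_\gamma=(\tfrac{1-m}m(\gamma+Q))^{-1/(1-m)}$ is positive everywhere. Again one needs $A>0$; for $m<1$ the denominator $3-d+dm$ must be examined. If $A\le0$ then $Q$ does not control $|x|$, and $g_\gamma$ fails to decay in a full direction, so it is not in $\mathrm L^1$. When $A>0$, a linear change of variables reduces the question to the integrability of $(1+|z|^2)^{-1/(1-m)}$ on $\R^{2d}$. Passing to polar coordinates, this holds iff $2/(1-m)>2d$, i.e.\ $m>1-1/d=m_1$, and for such $m$ one checks $A>0$. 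Combining the two cases gives exactly the range \eqref{Range}. I expect this sign bookkeeping on $A$ and $1+A$ to be the only delicate point. The restrictions $d=1$, $1/2<m<3/2$ concern Theorem~\ref{Thm:Main}, not this lemma.

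\emph{Identification of $f_\star$.} Plug $g_\star$ into \eqref{SSCoV} with $R_0=0$, so that $R(t)=((1-A)t)^{1/(1-A)}$. Take $X=x/R$ and $V=v/R^A-x/R$. The prefactor $R^{-d(1+A)}$ combines with the power $1/(m-1)$ into a multiplicative factor inside the pressure. A direct computation shows that $R^{2A}|V|^2\cdot R^{-2A}$, together with $\dot R/R=1/((1-A)t)$, reproduces
\[
\tfrac{1+A}{2(1-A)t}\Big(\big|v-\tfrac{x}{(1-A)t}\big|^2+A\,\big|\tfrac x{(1-A)t}\big|^2\Big),
\]
up to the exponent identities $2A-2\equiv$ the $R$-powers. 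This is the routine algebra that defines $A$. The constant $\gamma_\star$ becomes $\beta(t)$. Since $g_\star$ is stationary for \eqref{Eqn:g}, $f_\star$ solves \eqref{Eqn:f}. Alternatively, one can check directly that $\mathsf P_\star$ solves \eqref{eq:P} and that the positive part is a weak solution, using the standard argument for Barenblatt profiles.

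\emph{Initial datum and normalization.} Choose $\gamma_\star$ such that $\nrmxv{g_\star}1=1$. This is possible by monotonicity and continuity of $\gamma\mapsto\nrmxv{g_\gamma}1$, which follow from scaling. The change of variables in \eqref{SSCoV} preserves mass, since its Jacobian is $R^{d(1+A)}$. As $t\to0_+$ we have $R\to0$, so $f_\star(t)$ concentrates: for any test function $\varphi$, $\iint f_\star\varphi=\iint g_\star(X,V)\,\varphi(RX,R^A(V+X))\,dX\,dV\to\varphi(0,0)$ by dominated convergence, because $A>0$. Hence $f_\star(t)\to\boldsymbol\delta$, which is the Dirac initial datum required of a fundamental solution.
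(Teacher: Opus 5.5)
Your proposal is correct and follows the paper's largely implicit argument. The condition $A>0$ gives $m<m_2$, the Herrero--Pierre threshold in $\R^{2d}$ gives $m>m_1$, and $f_\star$ is $g_\star$ pulled back through \eqref{SSCoV} with $R_0=0$ via the identity $d\,(1-m)\,(1+A)=3A-1$, i.e.\ $R^{d(1+A)(1-m)-2A}=R^{A-1}=1/((1-A)\,t)$; your dominated-convergence check of the Dirac trace (which also needs $A<1$, true since $1-A=2d\,(m-m_1)/(3+d\,(m-1))>0$) makes explicit what the paper leaves unsaid.

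One fix in the sign bookkeeping: since $1+A=4/(3+d\,(m-1))>0$ for every $m>1$, your remark about $1+A<0$ belongs to the branch $m<1$, where $A<0$ occurs only for $m<1-3/d$; there $Q$ is indefinite, so $\gamma+Q$ changes sign as $|v|$ grows and $g_\gamma$ is not even locally integrable, and that, rather than a lack of control of $|x|$, is why integrability fails.
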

\noindent The Herrero-Pierre exponent corresponding to the threshold case of integrability of Barenblatt profiles in $\R^d$ is $m_c:=(d-2)/d$ according to~\cite{MR797051}. The exponent $m_1=(d-1)/d=(2\,d-2)/(2\,d)$ in Lemma~\ref{Lem:Kolmogorov} is the Herrero-Pierre exponent in $\R^d\times\R^d$, while the limitation $m<m_2$ arises from \hbox{$A(m)>0$}. We also refer to~\cite{BDNS2025} for more detailed references and to~\cite {Blanchet:2009sf} for an extended range of $m$. From here on, we shall assume that~(\ref{Range}) holds and take $\gamma=\gammastar$ in~(\ref{g}). Our goal is to prove Theorem~\ref{Thm:Main} and establish large time asymptotic properties of the solutions of \eqref{Eqn:f} using nonlinear parabolic methods in Section~\ref{Sec:Mass-Recaling} and relative entropy methods applied to the solutions of \eqref{Eqn:g} in Section~\ref{Sec:Entropy}. At formal level, we shall also sketch consequences on the diffusion limit and the linearization of \eqref{Eqn:g} around $g_\star$ in Section~\ref{Sec:Further}.
\setlength\unitlength{1cm}
\begin{figure}[ht]
\begin{center}
\begin{picture}(8.5,3.5)
\put(0,0.04){\includegraphics[width=4.2cm]{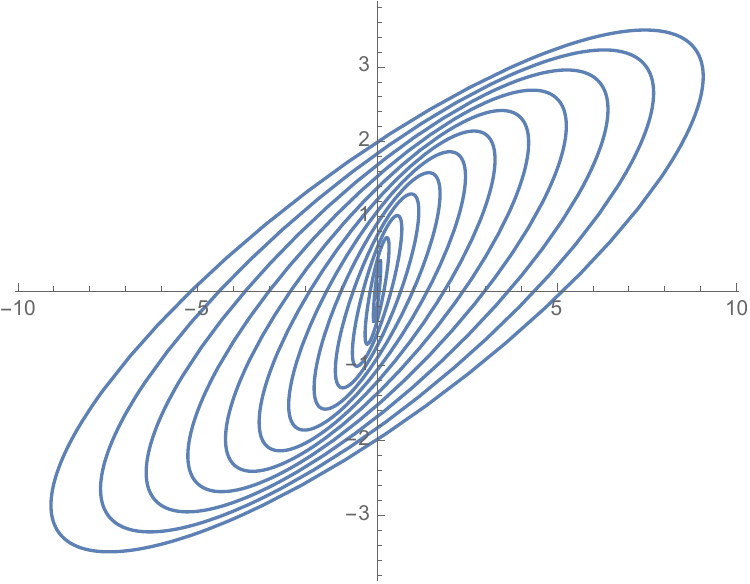}}
\put(0.1,2.35){\includegraphics[width=1.4cm]{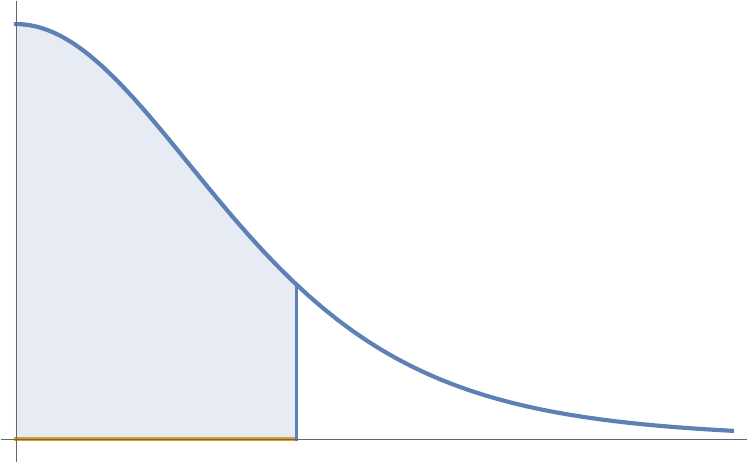}}
\put(4.4,0){\includegraphics[width=4.2cm]{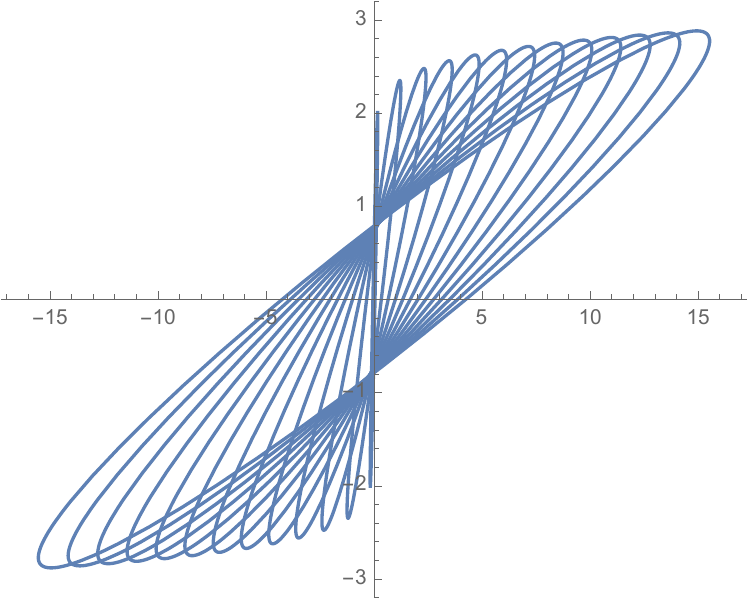}}
\put(4.6,2.35){\includegraphics[width=1.4cm]{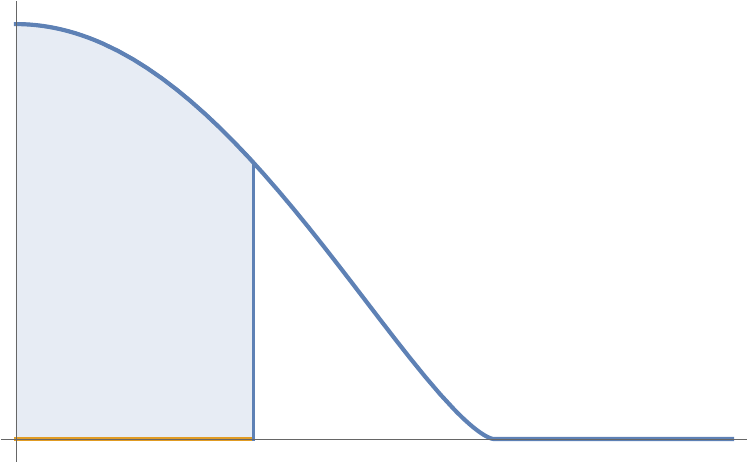}}
\put(4,1.8){\tiny{$x$}}
\put(8.45,1.8){\tiny{$x$}}
\put(2.2,3.2){\tiny{$v$}}
\put(6.6,3.2){\tiny{$v$}}
\put(0.2,3.2){\tiny{$\mathfrak B_{0.7}(r)$}}
\put(0.1,2.393){\vector(1,0){1.6}}
\put(0.128,2.35){\vector(0,1){1.1}}
\put(0.1,2.2){\tiny{$0$}}
\put(1.4,2.5){\tiny{$r$}}
\put(2.5,0.5){\fbox{\tiny{$m=0.7$}}}
\put(4.8,3.2){\tiny{$\mathfrak B_{1.7}(r)$}}
\put(4.6,2.393){\vector(1,0){1.6}}
\put(4.628,2.35){\vector(0,1){1.1}}
\put(4.6,2.2){\tiny{$0$}}
\put(5.9,2.5){\tiny{$r$}}
\put(6.9,0.5){\fbox{\tiny{$m=1.7$}}}
\end{picture}
\end{center}
\caption{\label{F1}In dimension $d=1$, the ellipse $\mathfrak E_m(t)$ is represented in the phase space $\R^d\times\R^d\ni(x;v)$ at times $t=0.1$, $t=0.6$, $1.1$\ldots $6.1$ for $m=0.7$ (left) and $m=1.7$ (right). In the upper left corners, the Barenblatt profiles $\mathfrak B_m(r):=\big(1\pm r^2\big){}_+^{1/(m-1)}$ are shown, with shaded areas corresponding to half of the mass. Here $\pm$ denotes the sign of $(1-m)$ and~$r$ is such that $r^2=A\,|x|^2+|v|^2$: the function $f_\star(t,\cdot,\cdot)$ has compact support if $m>1$.}
\end{figure}

\section{Mass, contraction and comparison}\label{Sec:Mass-Recaling}

\subsection{The mass parameter}\label{Sec:Mass}
Let $f$ be an arbitrary nonnegative function in $\mathcal C\big(\R^+;\mathrm L^1(\R^d\times\R^d)\big)$, $M>0$ a real number and
\[
f_M(t,x,v):=M\,f\(M^{\,2\,\zeta}\,t,M^{\,\zeta}\,x,M^{\,-\,\zeta}\,v\)\;\mbox{with}\;\zeta:=-\,\frac14\,(1-m)\,.
\]
\begin{lemma}\label{Lem:mass} With the above notation, $f_M$ solves \eqref{Eqn:f} if and only if~$f$ solves \eqref{Eqn:f}.\end{lemma}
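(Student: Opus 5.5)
This is a direct scaling computation. Write $f_M(t,x,v)=M\,f(s,y,w)$ with $s=M^{2\zeta}t$, $y=M^{\zeta}x$, $w=M^{-\zeta}v$, so that
\[
\partial_t=M^{2\zeta}\partial_s\,,\quad \nabla_x=M^{\zeta}\nabla_y\,,\quad \nabla_v=M^{-\zeta}\nabla_w\,,\quad \Delta_v=M^{-2\zeta}\Delta_w\,.
\]
Then I will compute each term of \eqref{Eqn:f} for $f_M$ and compare it with the same term for $f$ evaluated at $(s,y,w)$.

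The time derivative gives $\partial_tf_M=M^{1+2\zeta}\,\partial_sf$. For the transport term, $v=M^{\zeta}w$, so $v\cdot\nabla_xf_M=M\cdot M^{\zeta}w\cdot M^{\zeta}\nabla_yf=M^{1+2\zeta}\,w\cdot\nabla_yf$. The left-hand side is therefore $M^{1+2\zeta}(\partial_sf+w\cdot\nabla_yf)$. For the diffusion term, $f_M^m=M^m f^m$, so $\Delta_vf_M^m=M^{m-2\zeta}\,\Delta_wf^m$. The two sides have the same prefactor exactly when $1+2\zeta=m-2\zeta$, that is $4\zeta=m-1$, i.e. $\zeta=-\frac14(1-m)$, which is the stated choice.

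Hence $\partial_tf_M+v\cdot\nabla_xf_M-\Delta_vf_M^m=M^{1+2\zeta}\big(\partial_sf+w\cdot\nabla_yf-\Delta_wf^m\big)(s,y,w)$. Since the change of variables $(t,x,v)\mapsto(s,y,w)$ is a bijection of $\R^+\times\R^d\times\R^d$ and $M^{1+2\zeta}>0$, one side vanishes identically if and only if the other does. The converse implication also follows because $f=(f_M)_{1/M}$.

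For weak solutions, the same computation is carried out against test functions $\varphi(t,x,v)$ rescaled the same way; the Jacobian factors $M^{2\zeta}\cdot M^{\zeta d}\cdot M^{-\zeta d}$ are positive constants common to all terms. There is no real obstacle; the only point to watch is consistency of exponents. As a side remark, $\nrmxv{f_M(t)}1=M\,\nrmxv{f(M^{2\zeta}t)}1$, because the Jacobians in $x$ and $v$ cancel, which explains the name "mass parameter".
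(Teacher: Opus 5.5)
Your proposal is correct. The exponent balance $1+2\zeta=m-2\zeta$ is exactly what forces $\zeta=-\frac14(1-m)$, and the converse via $f=(f_M)_{1/M}$ together with the weak-formulation remark are fine. The paper states the lemma without proof as an elementary scaling check, and your direct computation is precisely that check.
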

\noindent Considering solutions of \eqref{Eqn:f} or \eqref{Eqn:g} with initial data $f_0$ such that $\nrmxv{f_0}1=1$ is therefore not a restriction. Let $\pm$ denote the sign of $(1-m)$. The condition $\nrmxv{g_\star}1=1$ determines $\gamma=\gammastar$ in \eqref{Eqn:g} such that
\be{Eq:Mass}
\tfrac{2^d}{\big(\sqrt A\,(1-A)\big)^d}\,\big|\tfrac{1-m}m\big|^\frac1{m-1}\,\gammastar^{\,d\,\frac{m-m_1}{m-1}}\kern-2pt\int_{\R^d\times\R^d}\big(1\pm\,|z|^2\big)_+^\frac1{m-1}\,dz=1\,.
\ee

\subsection{The \texorpdfstring{Comparison, $\mathrm L^1$}{L1}-contraction and consequences}

\begin{proposition}\label{Prop:Uniqueness} Under the assumptions of Theorem~\ref{Thm:Main}, Eqs.~(\ref{Eqn:g})-(\ref{g0}) has at most one solution in $\mathcal C\big(\R^+;\mathrm L^1(\R^d\times\R^d)\big)$ with
\[
g_{\gamma_1}(x,v)\le g(t,x,v)\le g_{\gamma_2}(x,v)\;\mbox{and}\;\|f(t,\cdot,\cdot)\|_{\mathrm L^1(\R^d\times\R^d)}=1\,.
\]
$(t,x,v)\in\R^+\times\R^d\times\R^d$ a.e.~and \hbox{$\|f(t,\cdot,\cdot)\|_{\mathrm L^1(\R^d\times\R^d)}=1$}.
\end{proposition}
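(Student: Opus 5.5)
Uniqueness will follow from an $\mathrm L^1$-contraction estimate for \eqref{Eqn:g}, obtained by Kru\v{z}kov's doubling of variables or, more simply, by testing the difference equation against a regularized sign function. Let $g_1$ and $g_2$ be two solutions in the stated class with the same initial datum $g_0$, and set $w:=g_1-g_2$. Then $w$ solves
\[
\partial_tw+v\cdot\nabla_xw-A\,x\cdot\nabla_vw=\Delta_v\big(g_1^m-g_2^m\big)+(1+A)\,\nabla_v\cdot(v\,w)\,.
\]
Let $\mathrm{sgn}_\varepsilon$ be a smooth nondecreasing approximation of the sign function, let $\chi_R(x,v)=\chi\big((|x|^2+|v|^2)/R^2\big)$ be a smooth cutoff, and use $\mathrm{sgn}_\varepsilon(g_1^m-g_2^m)\,\chi_R$ as a test function. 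I will carry out the argument in the following order.

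\emph{First}, the transport terms. The vector field $(v,-A\,x)$ is divergence free in $(x,v)$, so it contributes only through $\nabla\chi_R$, and $|\nabla_{x,v}\chi_R|\,(|x|+|v|)$ is bounded uniformly in $R$. The drift $(1+A)\nabla_v\cdot(v\,w)$ is handled in the same way: after integration by parts it leaves a zero-order term, which disappears as $\varepsilon\to0$ because $\mathrm{sgn}_\varepsilon'(g_1^m-g_2^m)\,w\,\nabla_v(g_1^m-g_2^m)$ is supported where $|g_1^m-g_2^m|<\varepsilon$, and a term supported on $\nabla\chi_R$.

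\emph{Second}, the diffusion term. It gives
\[
-\iint\mathrm{sgn}_\varepsilon'\,|\nabla_v(g_1^m-g_2^m)|^2\chi_R\le0
\]
plus a boundary term of the form $\iint|g_1^m-g_2^m|\,|\Delta_v\chi_R|$. This boundary term is where the bounds $g_{\gamma_1}\le g_i\le g_{\gamma_2}$ are used. If $m>1$, both solutions are supported in the compact support of $g_{\gamma_2}$, so for $R$ large all cutoff terms vanish identically. If $m<1$, we have $g_i^m\le g_{\gamma_2}^m\sim(|x|^2+|v|^2)^{-m/(1-m)}$, hence $\iint g_{\gamma_2}^m\,|\Delta_v\chi_R|\lesssim R^{2d-2-2m/(1-m)}$. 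This tends to $0$ as $R\to\infty$ exactly when $m>m_1$, up to checking the exponent; the extra restriction $1/2<m$ for $d=1$ should be what makes it work there. The $\mathrm L^1$ bound controls the remaining cutoff terms.

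\emph{Third}, letting $\varepsilon\to0$ and then $R\to\infty$ yields
\[
\frac d{dt}\iint|g_1-g_2|\,dx\,dv\le0\,,
\]
and, since $w(0)=0$, we conclude $g_1=g_2$. The mass constraint is then automatic; it mainly ensures that the solutions belong to the class in which the computation is justified. Because solutions are only in $\mathcal C(\R^+;\mathrm L^1)$, I would perform the computation on the weak formulation, using Steklov averages in time to justify the chain rule. The bounds $g_{\gamma_1}\le g_i$ with $(1-m)\,\gamma_1>0$ keep $g_i$ locally bounded away from $0$ when $m<1$, which makes the equation locally uniformly parabolic in $v$, so that $\nabla_v g_i^m\in\mathrm L^2_{\rm loc}$ is available.

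The main obstacle is the regularity needed to justify the test function: \eqref{Eqn:g} is only degenerate parabolic (hypoelliptic in $x$), so I expect to rely on an entropy-solution or Kru\v{z}kov-type framework, in which only $\nabla_v g^m\in\mathrm L^2_{\rm loc}$ is required. If this proves too delicate, I would instead prove uniqueness by duality (Holmgren's method), solving a backward linear adjoint problem with coefficient $a=(g_1^m-g_2^m)/(g_1-g_2)$, which is bounded above and below locally thanks to the two-sided bounds.
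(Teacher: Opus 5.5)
Your overall strategy matches the paper's: uniqueness follows from an $\mathrm L^1$-contraction (Lemma~\ref{Lem:L1contraction}, then Corollary~\ref{Cor:MaximumPrinciple}). The gap is in your first step, the transport term.

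With the test function $\mathrm{sgn}_\varepsilon(g_1^m-g_2^m)\,\chi_R$, the integrand $(v\cdot\nabla_xw-A\,x\cdot\nabla_vw)\,\mathrm{sgn}_\varepsilon(g_1^m-g_2^m)$ is not a divergence. The fact that $(v,-A\,x)$ is divergence free only helps when the test function is a function of $w$, and yours is a function of $g_1^m-g_2^m$. After integrating by parts you are left with
\[
\iint w\,\mathrm{sgn}_\varepsilon'(g_1^m-g_2^m)\,\big(v\cdot\nabla_x-A\,x\cdot\nabla_v\big)(g_1^m-g_2^m)\,\chi_R\,dx\,dv\,.
\]
This term contains $x$-derivatives of $g_i^m$. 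The equation gives no control on them (only on $\nabla_vg_i^m$), and the term cannot be absorbed by the dissipation $\iint\mathrm{sgn}_\varepsilon'\,|\nabla_v(g_1^m-g_2^m)|^2\,\chi_R$. Letting $\varepsilon\to0$ first does not help either, since that would require $v\cdot\nabla_xw$ to be a locally integrable function. Steklov averages in $t$ alone do not fix this: what the equation controls is $(\partial_t+v\cdot\nabla_x-A\,x\cdot\nabla_v)\,g$ as a whole, not $\partial_tg$ and the transport separately.

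The missing idea is to treat $\partial_t$ and the free transport as a single derivative along characteristics. This is what the paper does. It works on \eqref{Eqn:f} and sets $h(t,x,v)=f(t,x+t\,v,v)$, which gives \eqref{Eqn:ht}:
\[
\partial_th=(\nabla_v-t\,\nabla_x)\cdot(\nabla_v-t\,\nabla_x)\,h^m\,.
\]
This is a degenerate diffusion with no first-order term, to which the Herrero--Pierre approximation argument applies. The contraction then transfers to \eqref{Eqn:g}, because \eqref{SSCoV} is an $\mathrm L^1$-isometry at corresponding times. As a result the drift $(1+A)\,\nabla_v\cdot(v\,g)$ never has to be handled. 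A genuine Kru\v{z}kov doubling of variables in $(t,x,v)$ would also handle the transport, but that is not the argument you wrote.

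There are also two smaller points.

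\emph{The drift for $m>1$.} The drift remainder $(1+A)\iint w\,\mathrm{sgn}_\varepsilon'(g_1^m-g_2^m)\,v\cdot\nabla_v(g_1^m-g_2^m)\,\chi_R$ does not vanish just because of its support. On $\{|g_1^m-g_2^m|<\varepsilon\}$ you only know $|w|\le\varepsilon^{1/m}$, so $|w|\,\mathrm{sgn}_\varepsilon'\lesssim\varepsilon^{1/m-1}$, which blows up. You have to absorb the term into the dissipation by Young's inequality. What remains is of order $\varepsilon^{2/m-1}$, which tends to $0$ because $m<m_2\le2$. For $m<1$ your argument is fine, since there $|w|\le C\,|g_1^m-g_2^m|$.

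\emph{The condition for $d=1$.} Your cutoff exponent $2d-2-2m/(1-m)$ is negative for every $m>m_1$, including $d=1$. So the condition $1/2<m<3/2$ is not what makes uniqueness work. Indeed, Lemma~\ref{Lem:L1contraction} is stated for all $m\in(m_1,1)\cup(1,m_2)$; the extra restriction is used elsewhere in the proof of Theorem~\ref{Thm:Main}.
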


Any solution $f\in\mathcal C\big(\R^+;\mathrm L^1(\R^d\times\R^d)\big)$ of \eqref{Eqn:f} can be rewritten by solving the characteristics of the free transport operator, \emph{i.e.}, by solving
\be{Eqn:ht}
\partial_th=\(\Delta_v-2\,t\,\nabla_x\cdot\nabla_v+t^2\,\Delta_x\)h^m
\ee
where $h(t,x,v)=f(t,x+t\,v,v)$. Even if \eqref{Eqn:ht} is degenerate, classical methods for porous media ($m>1$) and fast diffusion ($m<1$) equations apply. We refer to~\cite{MR2282669,MR2286292,MR797051,Blanchet:2009sf} for further results on nonlinear diffusion equations in $\mathrm L^1_{\rm{loc}}(\R^d\times\R^d)$. Because fundamental solutions are positive with finite mass, we shall only consider solutions of \eqref{Eqn:f} with finite $\mathrm L^1(\R^d\times\R^d)$ norm and nonnegative initial data. In order to prove Proposition~\ref{Prop:Uniqueness}, the first step is the $\mathrm L^1$-contraction property.
\begin{lemma}\label{Lem:L1contraction} Let $d\ge1$ and $m\in(m_1,1)\cup(1,m_2)$. If $f_1$ and $f_2$ solve \eqref{Eqn:f} in $\mathcal C\big(\R^+;\mathrm L^1(\R^d\times\R^d)\big)$ in the sense of distributions, then $t\mapsto\irdxv{\big(f_2(t,x,v)-f_1(t,x,v)\big)_+}$ is nonincreasing.\end{lemma}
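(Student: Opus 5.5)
We follow the classical Kato-inequality argument for porous medium and fast diffusion equations, adapted to the free transport term. Set $w:=f_2-f_1$ and $\Phi:=f_2^m-f_1^m$, so that in the sense of distributions
\[
\partial_tw+v\cdot\nabla_xw=\Delta_v\Phi\,.
\]
Since $s\mapsto s^m$ is increasing on $\R^+$, $\mathrm{sign}_+(w)=\mathrm{sign}_+(\Phi)$. We approximate $\mathrm{sign}_+$ by a smooth nondecreasing function $p_\varepsilon$ with $p_\varepsilon(s)=0$ for $s\le0$ and $p_\varepsilon(s)=1$ for $s\ge\varepsilon$. We test the equation against $p_\varepsilon(\Phi)\,\chi_R(x,v)$, where $\chi_R$ is a standard cut-off equal to $1$ on the ball of radius $R$, supported in the ball of radius $2R$, with $|\nabla\chi_R|\lesssim1/R$ and $|\nabla^2\chi_R|\lesssim1/R^2$. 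Alternatively, one can work with the equivalent form \eqref{Eqn:ht} obtained by following the characteristics of free transport: the change of variables $(x,v)\mapsto(x+t\,v,v)$ preserves the Lebesgue measure, hence the $\mathrm L^1$ norm of $(f_2-f_1)_+$, and the operator $\Delta_v-2\,t\,\nabla_x\cdot\nabla_v+t^2\,\Delta_x=(\nabla_v-t\,\nabla_x)\cdot(\nabla_v-t\,\nabla_x)$ is a nonnegative degenerate elliptic operator in divergence form. It is then enough to prove the contraction for $\partial_th=\mathcal D^\top\mathcal D\,h^m$ with $\mathcal D=\nabla_v-t\,\nabla_x$.

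The key steps run as follows.

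\textbf{Step 1 (diffusion term).} Integrating by parts,
\[
\iint\Delta_v\Phi\;p_\varepsilon(\Phi)\,\chi_R=-\iint p_\varepsilon'(\Phi)\,|\nabla_v\Phi|^2\chi_R-\iint p_\varepsilon(\Phi)\,\nabla_v\Phi\cdot\nabla_v\chi_R\,.
\]
The first term is nonpositive and is dropped. The second term, after one more integration by parts, becomes $\iint J_\varepsilon(\Phi)\,\Delta_v\chi_R$, where $J_\varepsilon'=p_\varepsilon$, so that $|J_\varepsilon(\Phi)|\le|\Phi|$.

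\textbf{Step 2 (transport and time derivative).} Formally, $\partial_t w\,p_\varepsilon(\Phi)\to\partial_t(w_+)$ as $\varepsilon\to0$. The transport term gives $\iint w_+\,v\cdot\nabla_x\chi_R$. We therefore get
\[
\frac{d}{dt}\iint w_+\chi_R\le\iint|\Phi|\,|\Delta_v\chi_R|+\iint w_+\,|v|\,|\nabla_x\chi_R|\,.
\]
Making the passage $\varepsilon\to0$ rigorous for merely distributional solutions requires either the doubling of variables technique or a regularization (mollification in $(x,v)$ commutes with free transport up to controllable errors). We would instead assume the standard notion of weak solutions with $\nabla_v f^m\in\mathrm L^2_{\rm loc}$ and invoke the classical theory cited in~\cite{MR2282669,MR2286292}.

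\textbf{Step 3 (removing the cut-off).} This is the main obstacle. The transport error is harmless, since $|v|\,|\nabla_x\chi_R|\lesssim1$ on the support of $\chi_R$ and $w_+\in\mathrm L^1$, and the factor $|v|/R$ is bounded. The term $\iint|\Phi|\,R^{-2}$ over the annulus $\{R\le|(x,v)|\le2R\}$ is the delicate one.

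For $m>1$ this is easy if the solutions are bounded, because $|f_2^m-f_1^m|\le m\,\max(\|f_1\|_\infty,\|f_2\|_\infty)^{m-1}\,|f_2-f_1|$, which is integrable, and the term tends to $0$ as $R\to\infty$.

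For $m\in(m_1,1)$, Hölder's inequality on the annulus, whose volume is of order $R^{2d}$, gives
\[
R^{-2}\iint_{\rm annulus}f^m\le R^{-2}\,R^{2d(1-m)}\,\|f\|_{\mathrm L^1(\rm annulus)}^m\,.
\]
The right-hand side tends to zero provided $2d(1-m)\le2$, that is, $m\ge m_1$, since the $\mathrm L^1$ norm on the annulus goes to $0$. This is exactly where the Herrero–Pierre-type threshold $m_1$ in $\R^d\times\R^d$ enters. Letting $R\to\infty$ yields that $t\mapsto\iint(f_2-f_1)_+$ is nonincreasing.
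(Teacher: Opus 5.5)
Your proposal is essentially the Herrero--Pierre argument that the paper invokes in one line: Kato's inequality tested against a cut-off in the whole phase space, the transport error absorbed by the $\mathrm L^1$ tail because $|v|\,|\nabla_x\chi_R|\lesssim1$, and H\"older's inequality on an annulus of volume of order $R^{2d}$. For $m<1$ this last step is exactly where $m>m_1$, the Herrero--Pierre exponent of $\R^{2d}$, enters.

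As in the paper, you only sketch the rigorous justification of Kato's inequality, which is most naturally done in the variables of \eqref{Eqn:ht}, where the transport term disappears. For $m>1$ you additionally assume $f_1$ and $f_2$ are bounded; that is not in the statement, but it holds in the paper's applications, where solutions are trapped between $g_{\gamma_1}$ and $g_{\gamma_2}$.
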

\noindent Here $(\;)_+$ denotes the positive part. The result follows by standard methods of approximation as in~\cite{MR797051}. A straightforward consequence of Lemma~\ref{Lem:L1contraction} is the following \emph{Maximum Principle}.
\begin{corollary}\label{Cor:MaximumPrinciple} Let $d\ge1$ and $m\in(m_1,1)\cup(1,m_2)$. If $f_1$ and $f_2$ solve \eqref{Eqn:f} in $\mathcal C\big(\R^+;\mathrm L^1(\R^d\times\R^d)\big)$, in the sense of distributions, respectively with initial data $f_{1,0}$ and $f_{2,0}$ such that $f_{1,0}\le f_{2,0}$ a.e., then $f_1(t,\cdot,\cdot)\le f_2(t,\cdot,\cdot)$ a.e.~for all $t\in\R^+$.\end{corollary}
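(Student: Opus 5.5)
Corollary~\ref{Cor:MaximumPrinciple} follows directly from Lemma~\ref{Lem:L1contraction}, applied to the pair $(f_1,f_2)$ in that order. Set
\[
\Phi(t):=\irdxv{\big(f_1(t,x,v)-f_2(t,x,v)\big)_+}\,.
\]
The roles of $f_1$ and $f_2$ in Lemma~\ref{Lem:L1contraction} are symmetric, since both are arbitrary solutions of \eqref{Eqn:f} in $\mathcal C\big(\R^+;\mathrm L^1(\R^d\times\R^d)\big)$. Hence the lemma, with the labels exchanged, says that $t\mapsto\Phi(t)$ is nonincreasing on $\R^+$.

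The next step is to evaluate $\Phi$ at $t=0$. Because $f_i\in\mathcal C\big(\R^+;\mathrm L^1(\R^d\times\R^d)\big)$, we have $f_i(t,\cdot,\cdot)\to f_{i,0}$ in $\mathrm L^1$ as $t\to0^+$. The map $u\mapsto\irdxv{u_+}$ is $1$-Lipschitz on $\mathrm L^1$, because $|a_+-b_+|\le|a-b|$ pointwise. It follows that $\Phi$ is continuous on $[0,\infty)$ and
\[
\Phi(0)=\irdxv{(f_{1,0}-f_{2,0})_+}=0\,,
\]
using the assumption $f_{1,0}\le f_{2,0}$ a.e. Monotonicity then gives $0\le\Phi(t)\le\Phi(0)=0$ for all $t\ge0$.

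Finally, $\Phi(t)=0$ means that the nonnegative function $(f_1-f_2)_+(t,\cdot,\cdot)$ has zero integral. It therefore vanishes a.e., that is, $f_1(t,\cdot,\cdot)\le f_2(t,\cdot,\cdot)$ a.e.\ for every $t\in\R^+$.

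The only delicate point is making sure the initial datum is attained in the sense used by Lemma~\ref{Lem:L1contraction}. Distributional solutions alone do not pin down the value at $t=0$, which is why I use the $\mathcal C\big(\R^+;\mathrm L^1\big)$ continuity at $t=0$ explicitly. If the lemma is only established for $t>0$, the continuity argument above still closes the gap: it gives $\Phi(t)\le\lim_{s\to0^+}\Phi(s)=\Phi(0)$.
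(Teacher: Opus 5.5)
Your proposal is correct and is essentially the paper's argument: the paper states the corollary as a direct consequence of Lemma~\ref{Lem:L1contraction}. Applied with $f_1$ and $f_2$ exchanged, that lemma makes $t\mapsto\iint(f_1-f_2)_+\,dx\,dv$ nonincreasing, and this quantity vanishes at $t=0$. Your explicit use of the continuity in $\mathcal C\big(\R^+;\mathrm L^1\big)$ to evaluate $\Phi(0)$ makes precise a step the paper leaves implicit.
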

\noindent With the results of Corollary~\ref{Cor:MaximumPrinciple}, the proof of Proposition~\ref{Prop:Uniqueness} is now classical (see for instance~\cite{Blanchet:2009sf}).

\subsection{Existence of a solution}

\begin{proposition}\label{Prop:Existence} Under the assumptions of Theorem~\ref{Thm:Main}, Eqs.~(\ref{Eqn:g})-(\ref{g0}) has a solution $g$ in $\mathcal C\big(\R^+;\mathrm L^1(\R^d\times\R^d)\big)$.\end{proposition}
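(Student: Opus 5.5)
We build the solution by monotone approximation, with the stationary profiles $g_{\gamma_1}$ and $g_{\gamma_2}$ serving as barriers; both are stationary solutions of \eqref{Eqn:g}. The order of steps is: regularize, solve the regularized problems, pass to the limit, and check that mass is conserved.

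\emph{Regularization.} For $m<1$, fast diffusion, the diffusion coefficient $m\,g^{m-1}$ blows up where $g\to0$. For $m>1$ it vanishes there. The equation is also only hypoelliptic in $x$. We therefore regularize in two ways.
\begin{itemize}
\item We add a small diffusion $\varepsilon\,\Delta_x g$.
\item We replace $g^m$ by a smooth, strictly increasing $\varphi_\varepsilon(g)$ with $\varphi_\varepsilon'\ge\varepsilon$ that agrees with $g^m$ on $[\varepsilon,1/\varepsilon]$.
\end{itemize}
Since $g_{\gamma_1}\le g_0\le g_{\gamma_2}$ and both profiles are continuous, we can also truncate and mollify $g_0$. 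We work on balls $B_n\subset\R^d\times\R^d$ with Dirichlet data $g_{\gamma_i}$ on $\partial B_n$, or data between the two. Standard quasilinear parabolic theory then gives a unique smooth solution $g_{\varepsilon,n}$.

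\emph{Comparison.} The profiles $g_{\gamma_i}$ are no longer exact stationary solutions of the regularized equation, so the comparison principle of Corollary~\ref{Cor:MaximumPrinciple}, in its regularized and bounded-domain form, needs slightly adjusted barriers. Take $\gamma_1'$ and $\gamma_2'$ slightly farther from $\gamma_\star$ than $\gamma_1$ and $\gamma_2$. For small $\varepsilon$, the extra term $\varepsilon\Delta_x g_{\gamma_i'}$ then has a controlled sign up to an error. An alternative is to let $\varepsilon\to0$ first on bounded domains, where the profiles are bounded away from $0$ for $m<1$ and the equation is uniformly parabolic in $v$. Either way we obtain
\[
g_{\gamma_1'}\le g_{\varepsilon,n}\le g_{\gamma_2'}
\]
uniformly in $\varepsilon$ and $n$.

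\emph{Compactness and limit.} The $\mathrm L^1$-contraction of Lemma~\ref{Lem:L1contraction} holds for the regularized problems. It controls translations in $(x,v)$ of the approximate solutions, via translated initial data, up to the transport and drift terms. With the uniform barriers, which give equi-integrability, this yields compactness in $\mathcal C([0,T];\mathrm L^1_{\rm loc})$. Alternatively one can use the energy estimate $\int_0^T\!\!\iint|\nabla_v g^{(m+1)/2}|^2$ from multiplying by $g^m$, together with an averaging lemma for the transport part. The limit $g$ solves \eqref{Eqn:g} in the sense of distributions and satisfies $g_{\gamma_1}\le g\le g_{\gamma_2}$ after letting $\gamma_i'\to\gamma_i$.

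\emph{Mass and continuity.} The upper barrier $g_{\gamma_2}\in\mathrm L^1$ (Lemma~\ref{Lem:Kolmogorov}) is a uniform integrable majorant. Dominated convergence therefore upgrades local convergence to $\mathcal C(\R^+;\mathrm L^1(\R^d\times\R^d))$ and gives mass conservation $\iint g=1$. For $m<1$, the flux integrals on $\partial B_n$ vanish as $n\to\infty$ because $g_{\gamma_2}$ decays with integrable tails. For $m>1$, both barriers have compact support, so the solution has support in a fixed compact set.

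\emph{Main obstacle.} We expect the hardest step to be compactness in $x$. The diffusion acts only in $v$, so regularity in $x$ must come from hypoellipticity via the transport term $v\cdot\nabla_x-A\,x\cdot\nabla_v$, or from the $\mathrm L^1$ translation estimate. We would first try the $\mathrm L^1$-contraction approach, working in the variables of \eqref{Eqn:ht}, where free transport has been removed. There, a shift in $x$ commutes with the operator because its coefficients do not depend on $x$, so
\[
\|g(t,\cdot+h,\cdot)-g(t)\|_{\mathrm L^1}\le\|g_0(\cdot+h,\cdot)-g_0\|_{\mathrm L^1}.
\]
Together with the $v$-energy estimate and the Fréchet–Kolmogorov criterion, this gives the required compactness.
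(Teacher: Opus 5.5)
You take a different route from the paper (an $\varepsilon\,\Delta_x$ viscosity plus a nondegenerate nonlinearity, on balls $B_n$, instead of a splitting scheme). There is a genuine gap at the barrier step, and as written it fails.

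\textbf{Static barriers do not work.} Because $g_\gamma$ is an exact stationary solution of \eqref{Eqn:g}, its residual in your regularized equation is exactly $\varepsilon\,\Delta_xg_\gamma$ (plus the $\varphi_\varepsilon$ correction), whatever $\gamma$ is. Moreover $\Delta_xg_\gamma$ changes sign. It is negative near $x=0$ and positive where $g_\gamma$ is convex in $x$, namely in the tails if $m<1$ and near the free boundary if $m>1$. So no static profile $g_{\gamma'}$ is a sub- or supersolution, and replacing $\gamma_i$ by $\gamma_i'$ buys nothing by itself. Sending $\varepsilon\to0$ first on $B_n$ does not avoid this either, since that limit already needs $\varepsilon$-uniform bounds.

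For $m<1$ the idea can be rescued by letting the parameter drift in time at speed $O(\varepsilon)$. For example, $g_{\gamma_2-C\varepsilon t}$ is an upper barrier on $[0,T]$ for $\varepsilon\le\varepsilon(T)$, because $|\Delta_xg_\gamma|\le C\,|\partial_\gamma g_\gamma|$ in that case. You must also take $\varepsilon$ below the lower barrier on $B_n$, so that $\varphi_\varepsilon$ is not seen.

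For $m>1$ the claimed bound $g_{\varepsilon,n}\le g_{\gamma_2'}$ is false. With $\varphi_\varepsilon'\ge\varepsilon$ and $\varepsilon\,\Delta_x$, your equation is uniformly parabolic, so the strong maximum principle gives $g_{\varepsilon,n}>0$ everywhere in $B_n$ for $t>0$, while $g_{\gamma_2'}$ has compact support. In addition $\Delta_xg_\gamma/\partial_\gamma g_\gamma$ is unbounded at the free boundary, so a drifting $\gamma$ does not help there. This matters because your equi-integrability, dominated convergence, mass conservation, compact support for $m>1$, and the final bounds $g_{\gamma_1}\le g\le g_{\gamma_2}$ all depend on this step.

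\textbf{What the paper does instead.} It approximates in a way that keeps the profiles \emph{exactly} stationary. In the variables of \eqref{Eqn:G} the transport is a rotation of phase space. The splitting scheme alternates the nonlinear Fokker-Planck flow in $v$ (with $x$ frozen) and this rotation, and $G_\gamma$ is invariant under both steps. Hence $G_{\gamma_1}\le G_n\le G_{\gamma_2}$ holds with the original parameters. Both steps are $\mathrm L^1$-contractions ($x$ is a mere parameter in the first, and the second is an isometry), so the translation modulus $\omega(e^{\lambda t}\delta)$ comes for free. Compactness then follows from Riesz-Fr\'echet-Kolmogorov and a bound on $\partial_tG_n$ in a dual space.

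\textbf{How to repair your route.} If you prefer to keep a regularization, obtain the bounds only in the limit, through $\mathrm L^1$-stability. Take $\varepsilon\,\Delta_x$ as the only regularization and let $\bar g_\varepsilon$ be the solution issued from $g_{\gamma_2}$. By comparison it dominates $g_\varepsilon$, and it satisfies $\nrmxv{\bar g_\varepsilon(t)-g_{\gamma_2}}1\le\varepsilon\,t\,\nrmxv{\Delta_xg_{\gamma_2}}1$, which is finite under the assumptions of Theorem~\ref{Thm:Main}. This gives $g\le g_{\gamma_2}$ a.e.~and uniform integrability; the lower bound is obtained the same way.

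\textbf{Translation estimates.} Your translation argument needs the whole space, since Dirichlet data on $B_n$ break translation invariance. You also do not need \eqref{Eqn:ht}: $g(t,x-y(t),v-w(t))$ solves \eqref{Eqn:g} whenever $y'=w$ and $w'=-A\,y-(1+A)\,w$. The contraction therefore directly gives a modulus of the form $\omega(C\,e^{t}\delta)$.
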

\begin{proof} \!We rewrite \eqref{Eqn:g} for $g(t,x,v)=G\big(\sqrt At,A^{1/4}x,A^{-1/4}v\big)$ as
\be{Eqn:G}
\partial_tG+v\cdot\nabla_xG-x\cdot\nabla_vG=\frac1A\,\Delta_vG^m+\,\frac{1+A}{\sqrt A}\,\nabla_v\cdot(v\,G)
\ee
so that the stationary solution $g_\gamma$ for \eqref{Eqn:g} is transformed into the stationary solution for \eqref{Eqn:G} given by
\[
G_\gamma(x,v):=\(\tfrac{1-m}m\,\(\gamma+\tfrac12\,(1+A)\,\sqrt A\(|v|^2+|x|^2\)\)\)_+^\frac1{m-1}\,.
\]
We consider a function $f_0$ satisfying the assumptions of Theorem~\ref{Thm:Main} and consider the following \emph{splitting scheme}:\\
$\bullet$ for any $t\in[0,1/n]$, $n\in\N\setminus\{0\}$, we solve
\begin{align*}
&\frac{\partial G}{\partial t}=2\(\frac1A\,\Delta_vG^m+\,\frac{1+A}{\sqrt A}\,\nabla_v\cdot(v\,G)\)\quad&\mbox{if}\quad0\le t\le\tfrac1{2\,n}\,,\\
&\frac{\partial G}{\partial t}+2\,\big(v\cdot\nabla_xG-x\cdot\nabla_vG\big)=0\quad&\mbox{if}\quad\tfrac1{2\,n}\le t\le\tfrac1n\,,
\end{align*}
with initial datum $G_0(x,v)=g_0\big(A^{-1/4}\,x,A^{1/4}\,v\big)$. We denote the solution by $S_n(t)\,G_0$, $t\in[0,1/n]$.
\\
$\bullet$ Let $n\in\N\setminus\{0\}$ and define the function $G_n$ on $\R^+\times\R^d\times\R^d\ni(t,x,v)$ by
\[
G_n\(t+\tfrac kn,x,v\)=S_n(t)\,G_n\(\tfrac kn,x,v\)\quad\forall\,(k,t)\in\N\times\,[0,1/n]\,.
\]
The function $G_n$ inherits of all good properties of the nonlinear diffusion and free transport equations: mass conservation, maximum principle, $\mathrm L^1$-contraction, uniqueness. A remarkable feature is that $G_\gamma(x,v)=g_\gamma\big(A^{-1/4}\,x,A^{1/4}\,v\big)$ is invariant under both steps of the splitting scheme: $S_n(t)\,G_\gamma=G_\gamma$ for any $t\in[0,1/n]$. This is true in particular for $G_\star:=G_{\gamma_\star}$. Moreover, the initial bound \eqref{Initial:f} is transformed into $G_{\gamma_1}\le G_n\le G_{\gamma_2}$. As a consequence, the sequences $\big(\big(|x|^2+|v|^2\big)\,(G_n-G_\star)\big)_{n\in\N}$ and $\big(\big(1+|x|^2+|v|^2\big)^{-1}G_n^m\big)_{n\in\N}$ are bounded in $\mathrm L^\infty\big(\R^+;\mathrm L^1(\R^d\times\R^d)\big)$. Moreover, there is a positive constant~$\lambda$ and a modulus of continuity $\omega$, such that \hbox{$\lim_{\delta\to 0}\omega(\delta)=0$}, depending only on
~$f_0$, such~that for every $\delta>0$, one has
\[
\sup_{|y|^2+|w|^2\le\delta^2}\nrmxv{G_n(t,y+\cdot,w+\cdot)-G_n(t,\cdot,\cdot)}1\le\omega\big(e^{\lambda t}\delta\big)
\]
for every $t\ge0$ and $n\in\N\setminus\{0\}$. By the Riesz-Fr\'echet-Kolmogorov theorem, we learn that for each $T>0$, $[0,T]\ni t\mapsto G_n(t,\cdot,\cdot)$ remains in a compact subset of $\mathrm L^1(\R^d\times\R^d)$. Moreover $\partial_tG_n$ is uniformly bounded in $\mathrm L^\infty(\R^+;X^*)$ where $X$ is the Banach space of $\mathcal C^{1,1}(\R^d\times\R^d)$-test functions $\varphi$ for which
\begin{multline*}
\|\varphi\|_X:=\nrmxv\varphi\infty+\nrmxv{(|x|+|v|)\,\nabla\varphi}\infty\\
+\big\|(1+|x|^2+|v|^2)\,D^2\varphi\big\|_{\mathrm L^\infty(\R^d\times\R^d)}
\end{multline*}
is finite and $X^*$ denotes its topological dual so that $\mathrm L^1(\R^d\times\R^d)$ is continuously embedded in $X^*$. The Arzel\`a-Ascoli Theorem then enables us to find a subsequence of $G_n$ which converges both in $\mathcal C\big([0,T];X^*\big)$ and in $\mathrm L^1\big((0,T)\times\R^d\times\R^d\big)$ to some limit $G\in\mathcal C\big([0,T];\mathrm L^1(\R^d\times\R^d)\big)$. The fact that the limit $G$ solves \eqref{Eqn:G} then easily follows.
\end{proof}

\subsection{Scalings and the four-step method}

A possible strategy for proving Theorem~\ref{Thm:Main} is to adapt the \emph{four-step method} introduced by S.~Kamin and J.L.~V\'azquez in~\cite{MR1028745} and summarized for instance in~\cite[Section~18.2]{MR2286292}, with the following key issues. Scaling properties of the equation have already been taken into account in the self-similar change of variables \eqref{SSCoV}. With $A$ as in \eqref{R}, notice in particular that
\be{fKV}
f_\star(t,x,v)=t^{-d\,\frac{1+A}{1-A}}\,f_\star\(1,t^{-\frac1{1-A}}\,x,t^{-\frac A{1-A}}\,v\)
\ee
for all $t>0$. Within our framework, regularity properties can probably be derived from \eqref{Eqn:ht} using standard parabolic methods for nonlinear diffusions and hypoelliptic techniques but, to our knowledge, such a theory is still to be done. Compactness properties would then follow. The limit as $t\to+\infty$ of the solution has to be one of the self-similar solutions for \eqref{Eqn:f} with unit mass. After the change of variables \eqref{R}, we shall prove that the unique possible limit turns out to be~$g_\star$. In order to prove Theorem~\ref{Thm:Main}, we ultimately rely on entropy methods, which are well adapted to kinetic equations.

Incidentally, we can observe that \eqref{Eqn:f} admits a scale invariance $f\mapsto f_\lambda$ for any $\lambda>0$ that preserves mass, with
\[
f_\lambda(t,x,v):=\lambda^4\,f\(\lambda^{2\,(m-m_1)}\,t,\,\lambda^{m-m_3}\,x,\,\lambda^{m_2-m}\,v\)\,,
\]
$m_1=1-1/d$ and $m_2=1+1/d$ as in~(\ref{Range}) and $m_3:=1-3/d$. We recover \eqref{fKV} with the ansatz $\lambda^{2\,(m-m_1)}\,t=1$. Combined with the mass invariance of Lemma~\ref{Lem:mass}, this allows us to relax \eqref{Initial:f} to a more general condition on the behaviour of  $f_0\in\mathrm L^\infty(\R^d\times\R^d)$ as $|x|^2+|v|^2\to\infty$ than~(\ref{Initial:f}).

\section{Entropy methods in self-similar variables}\label{Sec:Entropy}

\subsection{Moments}\label{Sec:Moments}

According to Proposition~\ref{Prop:Uniqueness}, $g_\star$ has finite mass for any $m>m_1$. However, $g_\star$ admits a moment $\(|x|^2+|v|^2\)$ only if
\be{tilde:m1}
m>\widetilde m_1=\frac d{d+1}>m_1\,.
\ee
The missing range is covered by considering the difference with~$g_\star$.
\begin{lemma}\label{Lem:Moments} Under the assumptions of Theorem~\ref{Thm:Main}, the solution~$g$ of Eqs.~(\ref{Eqn:g})-(\ref{g0}) is such that
\[
\sup_{t\ge0}\irdxv{\(|x|^2+|v|^2\)|g(t,x,v)-g_\star(x,v)|}<\infty\,.
\]\end{lemma}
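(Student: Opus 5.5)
Since $g$ is trapped between two stationary profiles, the natural approach is to reduce the moment bound to an explicit integral involving only those profiles. By Proposition~\ref{Prop:Uniqueness}, Corollary~\ref{Cor:MaximumPrinciple}, and the construction of Proposition~\ref{Prop:Existence}, $g_{\gamma_1}$ and $g_{\gamma_2}$ are stationary solutions of \eqref{Eqn:g}. Comparison then propagates the bound \eqref{Initial:f}, rewritten as $g_{\gamma_1}\le g_0\le g_{\gamma_2}$ through \eqref{g0}, to all times:
\[
g_{\gamma_1}(x,v)\le g(t,x,v)\le g_{\gamma_2}(x,v)\quad\mbox{a.e.}
\]
The same ordering holds for $g_\star=g_{\gamma_\star}$. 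Indeed $M_1<1<M_2$, and the mass of $g_\gamma$ is strictly monotone in $\gamma$, so $\gamma_\star$ lies between $\gamma_1$ and $\gamma_2$. Consequently, for all $t\ge0$,
\[
|g(t,x,v)-g_\star(x,v)|\le g_{\gamma_2}(x,v)-g_{\gamma_1}(x,v)\,,
\]
and it suffices to show that $(|x|^2+|v|^2)\,(g_{\gamma_2}-g_{\gamma_1})\in\mathrm L^1(\R^d\times\R^d)$. This bound is uniform in $t$.

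To estimate $g_{\gamma_2}-g_{\gamma_1}$, write $q:=\frac{1+A}2\,(|v|^2+A\,|x|^2)$, which is comparable to $|x|^2+|v|^2$ since $A>0$. Apply the mean value theorem to $s\mapsto(\frac{1-m}m\,s)_+^{1/(m-1)}$:
\[
g_{\gamma_2}-g_{\gamma_1}\lesssim|\gamma_2-\gamma_1|\,\big(\tfrac{1-m}m(\theta+q)\big)_+^{\frac1{m-1}-1}
\]
for some $\theta$ between $\gamma_1$ and $\gamma_2$. The two cases then go as follows.
\begin{itemize}
\item If $m\in(1,m_2)$, both profiles have compact support and are bounded, so the weighted integral is trivially finite.
\item If $m\in(m_1,1)$, the right-hand side behaves like $(1+|x|^2+|v|^2)^{-\frac1{1-m}-1}$ at infinity. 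After multiplying by $|x|^2+|v|^2$, the integrand is of order $(1+r^2)^{-\frac1{1-m}}$ in dimension $2d$. This is integrable exactly when $\frac2{1-m}>2d$, that is, $m>m_1$.
\end{itemize}
This shows how the difference gains one power of $r^{-2}$ and thereby covers the range $m_1<m\le\widetilde m_1$ in \eqref{tilde:m1}, where $g_\star$ alone has no second moment.

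The main delicate point is the first step: justifying the comparison for \eqref{Eqn:g} with stationary solutions that have infinite second moment (and, when $m<1$, infinite support). I would handle it by transporting back to \eqref{Eqn:f} through \eqref{SSCoV} and applying Corollary~\ref{Cor:MaximumPrinciple} there, using that $g_{\gamma_i}$ correspond to solutions of \eqref{Eqn:f} in $\mathcal C(\R^+;\mathrm L^1)$. Alternatively, the bound is already built into the solution class of Proposition~\ref{Prop:Uniqueness} and the splitting scheme of Proposition~\ref{Prop:Existence}, where $G_{\gamma_1}\le G_n\le G_{\gamma_2}$ is preserved step by step and passes to the limit.
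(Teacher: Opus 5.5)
Your proposal is correct and follows the paper's proof. The paper also uses the trapping $g_{\gamma_1}\le g(t,\cdot,\cdot)\le g_{\gamma_2}$ from Proposition~\ref{Prop:Uniqueness} to bound $|g-g_\star|$ by $g_{\gamma_2}-g_{\gamma_1}=O\big((|x|^2+|v|^2)^{\frac{2-m}{m-1}}\big)$, whose second moment is finite exactly when $m>m_1$; your mean value estimate, and your observation that $g_\star$ lies between $g_{\gamma_1}$ and $g_{\gamma_2}$ because the mass of $g_\gamma$ is strictly monotone in $\gamma$, spell out what the paper leaves implicit.
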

\begin{proof} This is a consequence of Proposition~\ref{Prop:Uniqueness} and
\begin{multline*}
|g(t,x,v)-g_\star(x,v)|\le g_{\gamma_2}(x,v)-g_{\gamma_1}(x,v)\\
=O\(\big(|x|^2+|v|^2\big)^{\frac{2-m}{m-1}}\)
\end{multline*}
as $\(|x|^2+|v|^2\)\to\infty$. Against $\(|x|^2+|v|^2\)$, the right-hand side is indeed integrable if and only if $m>m_1$.\end{proof}

Let us define the \emph{relative entropy} functional by
\[
\mathcal E[g]:=\mathcal H[g]-\mathcal H[g_\star]
\]
with $\mathcal H[g]:=\irdxv{\(\frac{g^m}{m-1}+\tfrac{1+A}2\,\big(|v|^2+A\,|x|^2\big)g\)}$ for any $m$ satisfying~(\ref{Range}), and with the help of Lemma~\ref{Lem:Moments} if $m\le\widetilde m_1$. This definition generalizes the entropy introduced in~\cite{Newman1984,Ralston1984} for nonlinear diffusions and was used for instance in~\cite{MR2338354} for a kinetic relaxation model with same local equilibria. We shall write $\mathcal E(t)=\mathcal E[g(t,\cdot,\cdot)]$ if $g$ solves \eqref{Eqn:g}. If $m<1$, notice that
\[
\mathcal E[g]=\frac1{m-1}\irdxv{\(g^m-g_\star^m-m\,g_\star^{m-1}\,\big(g-g_\star\big)\)}\,.
\]
In the range $m_1<m\le\widetilde m_1$, the relative entropy can be understood as the functional acting on $w=g/g_\star$ given by
\[
w\mapsto \frac1{m-1}\irdxv{\!\(w^m-1-m\,\big(w-1\big)\)g_\star^m}\,.
\]
\emph{Cf.}~\cite{Blanchet:2009sf} for details.

\subsection{Relative entropy estimates}\label{Sec:RelativeEntropy}

Let us consider a nonnegative solution $g$ of \eqref{Eqn:g} with the pressure variables pressure
\[
\mathsf Q:=\frac m{1-m}\,g^{m-1}\quad\mbox{and}\quad\mathsf Q_\star:=\gammastar+\tfrac{1+A}2\(\,|v|^2+A\;|x|^2\,\)
\]
where $\mathsf Q_\star$ is the pressure associated to $g_\star$.
\begin{lemma}\label{Lem:EEP} Under the assumptions of Theorem~\ref{Thm:Main}, if $g$ solves Eqs.~(\ref{Eqn:g})-(\ref{g0}), then
\[
\frac d{dt}\mathcal E(t)=-\irdxv{g\,\big|\nabla_v\mathsf Q-\nabla_v\mathsf Q_\star\big|^2}\quad\forall\,t>0\,.
\]
\end{lemma}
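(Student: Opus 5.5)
The plan is to differentiate $\mathcal H[g(t,\cdot,\cdot)]$ along \eqref{Eqn:g}, which is the standard entropy--entropy production computation. I first write the right-hand side of \eqref{Eqn:g} in divergence form using the pressure. On the support of $g$ one has $\nabla_v g^m=-\,g\,\nabla_v\mathsf Q$. Indeed, $\nabla_v\mathsf Q=m\,g^{m-2}\,\nabla_v g\cdot(-1)$, so that $g\,\nabla_v\mathsf Q=-\,m\,g^{m-1}\nabla_vg=-\nabla_vg^m$. Since $\nabla_v\mathsf Q_\star=(1+A)\,v$, this gives
\[
\Delta_vg^m+(1+A)\,\nabla_v\cdot(v\,g)=-\,\nabla_v\cdot\big(g\,(\nabla_v\mathsf Q-\nabla_v\mathsf Q_\star)\big)\,.
\]
The transport part $v\cdot\nabla_x-A\,x\cdot\nabla_v$ is divergence free in $(x,v)$. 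Hence \eqref{Eqn:g} reads
\[
\partial_tg+\nabla_{x,v}\cdot\big(g\,(v,-A\,x)\big)=\nabla_v\cdot\big(g\,\nabla_v(\mathsf Q_\star-\mathsf Q)\big)\,.
\]

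Next I observe that the variational derivative of $\mathcal H$ is $\frac m{m-1}g^{m-1}+\frac{1+A}2(|v|^2+A|x|^2)=\mathsf Q_\star-\mathsf Q-\gammastar$, and the constant $\gammastar$ is irrelevant by mass conservation. Testing the equation against $\mathsf Q_\star-\mathsf Q$ gives two contributions. The transport term contributes $\irdxv{g\,(v,-A\,x)\cdot\nabla_{x,v}(\mathsf Q_\star-\mathsf Q)}$. Its part involving $\mathsf Q_\star$ vanishes, because $(v,-A\,x)\cdot\big((1+A)A\,x,(1+A)\,v\big)=0$. Its part involving $\mathsf Q$ equals $\irdxv{(v,-Ax)\cdot\nabla g^m}$ up to a constant factor, and this is zero by integration by parts since the field is divergence free. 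Equivalently, the transport operator is the Hamiltonian flow of $\mathsf Q_\star$, which preserves both $\irdxv{g^m}$ and $\irdxv{\mathsf Q_\star\,g}$. The diffusion term, after one integration by parts in $v$, gives exactly $-\irdxv{g\,|\nabla_v\mathsf Q-\nabla_v\mathsf Q_\star|^2}$. Since $\mathcal H[g_\star]$ is a constant, this proves the identity for $\mathcal E$.

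The main obstacle is justification rather than algebra. First, $g$ is only known in $\mathcal C(\R^+;\mathrm L^1)$, and $\mathcal H[g]$ may not be separately finite when $m\le\widetilde m_1$. Second, the integrations by parts need decay at infinity and regularity near the boundary of the support when $m>1$. To handle these, I would perform the computation on the relative form of $\mathcal E$ written in terms of $w=g/g_\star$. The bounds $g_{\gamma_1}\le g\le g_{\gamma_2}$ from Proposition~\ref{Prop:Uniqueness} and the moment bound of Lemma~\ref{Lem:Moments} then control every boundary term, since the differences decay like $(|x|^2+|v|^2)^{(2-m)/(m-1)}$. I would first prove the identity for the smooth approximations provided by the scheme of Proposition~\ref{Prop:Existence}, or by a regularized equation, using cut-off functions in $(x,v)$. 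I would then pass to the limit using lower semicontinuity of the dissipation. This yields the inequality $\le$ directly, and equality follows for regular solutions.
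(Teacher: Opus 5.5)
Your proposal is correct and follows the same route as the paper. You write the right-hand side of \eqref{Eqn:g} as $\nabla_v\cdot\big(g\,(\nabla_v\mathsf Q_\star-\nabla_v\mathsf Q)\big)$, test against $\mathsf Q_\star-\mathsf Q$, integrate by parts, and control the tails through the relative form of $\mathcal E$ and Lemma~\ref{Lem:Moments}. You are more explicit than the paper about why the transport term $v\cdot\nabla_xg-A\,x\cdot\nabla_vg$ contributes nothing, which its displayed identity leaves implicit. Your caveat that a limiting argument only yields the dissipation inequality for weak solutions is fair; the paper's one-line justification glosses over it too, and that inequality is all the proof of Theorem~\ref{Thm:Main} uses.
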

\begin{proof} By Lemma~\ref{Lem:Moments}, $g(t,\cdot,\cdot)-g_\star$ is bounded uniformly in $t$ in $\mathrm L^1\big(\R^d\times\R^d,\,\big(|x|^2+|v|^2\big)\,dx\,dv\big)$. A direct computation using
\[
\Delta_vg^m+(1+A)\,\nabla_v\cdot(v\,g)=\nabla_v\cdot\(g\,\big(\nabla_v\mathsf Q_\star-\nabla_v\mathsf Q\big)\)
\]
shows that
\begin{multline*}
\frac1{m-1}\,\frac d{dt}\irdxv{\(g^m-m\,g_\star^{m-1}\,g\)}\\
=\irdxv{(\mathsf Q_\star-\mathsf Q)\,\nabla_v\cdot\(g\,\big(\nabla_v\mathsf Q_\star-\nabla_v\mathsf Q\big)\)}\,.
\end{multline*}
An integration by parts completes the proof as in~\cite{Blanchet:2009sf}.\end{proof}

\subsection{Convergence in \texorpdfstring{$\mathrm L^1$}{L1} and intermediate asymptotics}\label{Sec:MainProof}

The proof of Theorem~\ref{Thm:Main} has to be completed by identifying the limit as $t\to+\infty$ of the solution of \eqref{Eqn:g}: we have to prove that
\[
\lim_{t\to+\infty}\nrmxv{g(t,\cdot,\cdot)-g_\star}1=0\quad\mbox{and}\quad\lim_{t\to+\infty}\mathcal E(t)=0
\]
if $g$ solves Eqs.~(\ref{Eqn:g})-(\ref{g0}) under Assumption~(\ref{Initial:f}).

\begin{proof}[Proof of Theorem~\ref{Thm:Main}] By Proposition~\ref{Prop:Uniqueness}, $g$ is uniformly bounded on $\R^+\times\R^d\times\R^d$ with bounded moments for any $t\ge0$. Lemma~\ref{Lem:EEP} yields that $\mathcal E(t)$ decreases to a limit and
\be{eq:intfisherg}
\lim_{t\to+\infty}\int_t^{+\infty}\irdxv{g\,\big|\nabla_v\mathsf Q-\nabla_v\mathsf Q_\star\big|^2}\,ds=0\,.
\ee
Let us define the \emph{spatial density} of $g$ by
\[
\rho_g(t,x):=\irdv{g(t,x,v)}
\]
and the corresponding \emph{local equilibrium} by
\be{gloc}
\widetilde g(t,x,v):=\(\mu(t,x)+\tfrac{1-m}{2\,m}\,(1+A)\,|v|^2\)_+^{\frac1{m-1}}
\ee
where $\mu$ is found by inverting
\[
\rho_g(t,x)=\irdv{\Big(\mu(t,x)+\frac{1-m}{2\,m}\,(1+A)\,|v|^2\Big)_+^{\frac1{m-1}}}\,,
\]
\emph{i.e.}, $\mu(\tau,x)=\mu_1\,\big(\rho_g(\tau,x)\big)^{k-1}$ for some $\mu_1>0$ with $k$ such that
\be{k}
\frac1{k-1}=\frac d2+\frac1{m-1}\,.
\ee
Notice that $\rho_g$ is uniformly bounded on $\R^+\times\R^d$ by Proposition~\ref{Prop:Uniqueness}.
If $m<1$, using the Csisz\'ar-Kullback inequality of~\cite[Section~2.4.6]{BDNS2025} and the entropy-entropy production inequality~\cite[Lemma~1.12]{BDNS2025}, which amounts to a Gagliardo-Nirenberg inequality in the $v$ variable only, according to~\cite{MR1940370}, we obtain
\[
\irdv{g\,\big|\nabla_v\mathsf Q-\nabla_v\mathsf Q_\star\big|^2}\ge\mathcal C\,\frac{\nrmv{g(t,x,\cdot)-\widetilde g(t,x,\cdot)}1^2}{(\rho_g(t,x))^{2-k}}
\]
$(t,x)$ a.e. for some $\mathcal C>0$ depending only on $d$ and $m$. With $|g-\widetilde g|=\rho_g^{1-k/2}\cdot|g-\widetilde g|/\rho_g^{1-k/2}$ and a Cauchy-Schwarz inequality, we obtain
\be{eq:L1convf}
\lim_{t\to\infty}\int_t^{t+1}\nrmxv{g(s,\cdot,\cdot)-\widetilde g(s,\cdot,\cdot)}1^2\,ds=0\,.
\ee
Take any sequence $(t_n)_{n\in\N}$ such that $0<t_n\to\infty$, set $g_n(t,x,v):=g(t_n+t,x,v)$, $\widetilde g_n(t,x,v):=\widetilde g(t_n+t,x,v)$ and $\rho_n:=\rho_{g_n}$. We already know that $g_n-\widetilde g_n\to0$ as $n\to+\infty$ in $\mathrm L^1\big((0,1)\times\R^d\times\R^d)\big)$ and $g_n\,(\nabla_vQ_n-\nabla_vQ_\star)\to0$ in $\mathrm L^2\big((0,1)\times\R^d\times\R^d\big)$ by \eqref{eq:intfisherg}, where $Q_n:=\frac m{1-m}\,g_n^{m-1}$. Via averaging Lemmas~\cite[Theorem~5]{MR1003433} applied to~\eqref{Eqn:g}, $\rho_n^R(t,x):=\irdv{g_n(t,x,v)\,\psi_R(v)}$ is bounded in $\mathrm H^{1/4}\big((0,1)\times\R^d\big)$ where $\psi_R\in C_c^\infty(\R^d)$ is a smooth cut-off function such that $0\le\psi_R\le1$, $\psi_R=1$ in $B_R$ and $\psi_R=0$ in $\R^d\setminus B_{R+1}$. By a diagonal argument as $n\to\infty$ and $R\to\infty$, we establish the a.e.~and $\mathrm L^1\big((0,1)\times\R^d\big)$ convergence of $(\rho_n)_{n\in\N}$ to some limit $\rho_\infty$, up to the extraction of a subsequence. By continuity of the mapping $\rho_n(t,\cdot)\mapsto \mu(t+t_n,\cdot)=:\mu_n(t,\cdot)$, the sequence $(\mu_n)_{n\in\N}$ converges a.e.~to a limit $\mu_\infty$, therefore $(\widetilde g_n)_{n\in\N}$ converges to some $g_\infty$ a.e.~and in $\mathrm L^1\big((0,1)\times\R^d\times\R^d\big)$. As a consequence of~\eqref{eq:L1convf}, so does $(g_n)_{n\in\N}$, with same limit $g_\infty$. Since $g_\infty=\widetilde g_\infty$ is given by \eqref{gloc} with $\mu=\mu_\infty$, we notice that
\[
\partial_tg_\infty+v\cdot\nabla_xg_\infty-A\,x\cdot\nabla_vg_\infty=0
\]
by passing to the limit in \eqref{Eqn:g}. We learn from \eqref{gloc} that~$g_\infty$ is radially symmetric in $v$ for every fixed $(t,x)$. By integrating the equation in $v$, we readily get $\partial_t \rho_\infty=0$, so that $\mu_\infty$ only depends on $x$ and $\partial_t g_\infty=0=v\cdot\nabla_xg_\infty-A\,x\cdot\nabla_vg_\infty$, yielding $m\,\nabla\mu_\infty(x)=(1-m)\,(1+A)\,A\,x$. Since $g_\infty$ and $g_\star$ have same mass, we conclude that $g_\infty=g_\star$. This limit is unique, which proves that
\[
\lim_{n\to+\infty}\|g_n(t,\cdot,\cdot)-g_\star\|_{\mathrm L^1\big((0,1)\times\R^d\times\R^d\big)}=0\,.
\]
along any sub-sequence of $(t_n)_{n\in\N}$ and also along any sequence $(t_n)_{n\in\N}$. By dominated convergence and monotonicity of $t\mapsto\mathcal E(t)$, we deduce that $\lim_{t\to+\infty}\mathcal E(t)=0$, yielding $g(t,.,.)\to g_\star$, a.e.~and in $\mathrm L^1(\R^d\times\R^d)$, as $t\to+\infty$.

If $1<m<\min\big\{m_2,3/2\big\}$, one has to replace $\nrmxv{g-\widetilde g}1$ in~\eqref{eq:L1convf} by $\irdxv{\big|g(s,\cdot,\cdot)-\widetilde g(s,\cdot,\cdot)\big|\,\widetilde g^{m-1}(s,\cdot,\cdot)}$ in order to use the generalized Csisz\'ar-Kullback inequality of~\cite[Proposition~15,~(ii)]{MR1940370}, which is sufficient to prove the strong convergence on the support of $g_\star$. The conclusion holds since $g$ and $g_\star$ are nonnegative functions with same mass.
\end{proof}
\noindent By H\"older interpolation and the change of variables \eqref{SSCoV}, we can obtain more general \emph{intermediate asymptotics} than in Theorem~\ref{Thm:Main}.
\begin{corollary}\label{Cor:IntermediateAsymptotics} Under the assumptions of Theorem~\ref{Thm:Main}, if $f$ solves \eqref{Eqn:f} with initial datum~$f_0$ and $p\in[1,\infty)$, then
\[
\lim_{t\to+\infty}t^{d\,\frac{p-1}p\,\frac{1+A}{1-A}}\,\nrmxv{f(t,x,v)-f_\star(t,x,v)}p=0\,.
\]\end{corollary}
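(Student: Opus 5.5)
The plan is to transfer the result to self-similar variables, where Theorem~\ref{Thm:Main} gives $\mathrm L^1$ convergence, and to interpolate with a uniform $\mathrm L^\infty$ bound. Take $R_0=1$, so that $f$ and $g$ are related by \eqref{SSCoV}, and note that $f_\star$ is related to $g_\star$ by the same formula with $R_0=0$. The two scalings differ slightly; I handle this below.

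\emph{Step 1: the $\mathrm L^p$ scaling.} The map $(x,v)\mapsto\big(x/R,\,v/R^A-x/R\big)$ has Jacobian $R^{-d(1+A)}$. Hence, for any $g$ and with $\tau=\log R(t)$,
\[
\nrmxv{f(t)}p=R(t)^{-d(1+A)}\,R(t)^{d(1+A)/p}\,\nrmxv{g(\tau)}p=R(t)^{-d(1+A)\frac{p-1}p}\nrmxv{g(\tau)}p .
\]
Since $R(t)\sim((1-A)\,t)^{1/(1-A)}$, the weight $t^{d\frac{p-1}p\frac{1+A}{1-A}}$ in the statement is exactly $R(t)^{d(1+A)\frac{p-1}p}$, up to a constant factor. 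It therefore suffices to show that $\nrmxv{\bar g(\tau)-\bar g_\star(\tau)}p\to0$, where $\bar g,\bar g_\star$ are the rescalings of $f(t),f_\star(t)$ with the common factor $R(t)$.

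\emph{Step 2: comparing the two rescalings.} With the common $R(t)$ from $R_0=1$, we have $\bar g=g(\tau)$. The function $\bar g_\star(\tau)$ is $g_\star$ evaluated with $R_\star(t)=((1-A)t)^{1/(1-A)}$ replaced by $R(t)$. Since $R_\star(t)/R(t)\to1$, $\bar g_\star(\tau)$ is $g_\star$ composed with a linear map tending to the identity and multiplied by a factor tending to $1$. Because $g_\star$ is bounded and belongs to $\mathrm L^1$, with continuous translations and dilations, we get $\bar g_\star(\tau)\to g_\star$ in $\mathrm L^1\cap\mathrm L^\infty$. Equivalently, one can apply Theorem~\ref{Thm:Main} in the form proved, namely $g(\tau)\to g_\star$ in $\mathrm L^1$.

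\emph{Step 3: interpolation.} By Proposition~\ref{Prop:Uniqueness}, $0\le g(\tau)\le g_{\gamma_2}\le\nrmxv{g_{\gamma_2}}\infty$. This bound is finite: for $m<1$ take $\gamma_2>0$, and for $m>1$ the profile is compactly supported. So $g(\tau)-g_\star$ is bounded in $\mathrm L^\infty$ uniformly in $\tau$. Hölder's inequality then gives
\[
\nrmxv{g-g_\star}p\le\nrmxv{g-g_\star}\infty^{1-1/p}\,\nrmxv{g-g_\star}1^{1/p},
\]
which tends to $0$ by Theorem~\ref{Thm:Main} in self-similar variables, that is, $\lim_{\tau\to\infty}\nrmxv{g(\tau)-g_\star}1=0$. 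Combined with Step 2 and the triangle inequality, this concludes.

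\emph{Main obstacle.} The only delicate point is the mismatch between $R_0=1$, used for $f$, and $R_0=0$, used for $f_\star$. It is handled by continuity of $g_\star$ under dilations. The $\mathrm L^1$ part of this estimate is continuity of dilations in $\mathrm L^1$. The $\mathrm L^\infty$ part needs uniform continuity of $g_\star$, which holds since $g_\star$ is continuous and decays at infinity (Hölder continuous at the support boundary when $m>1$). Alternatively, one can avoid $\mathrm L^\infty$ convergence altogether by using only uniform $\mathrm L^\infty$ bounds in the interpolation, applied to the difference $\bar g_\star(\tau)-g_\star$ as well.
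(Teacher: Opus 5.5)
Your proposal is correct and follows the paper's own route: Hölder interpolation between the $\mathrm L^1$ convergence of Theorem~\ref{Thm:Main} and the uniform bound $g\le g_{\gamma_2}$, transported through the change of variables \eqref{SSCoV}. Your Step~2 correctly handles the $R_0=1$ versus $R_0=0$ mismatch, which the paper leaves implicit. You could also bypass that step by interpolating $f(t,\cdot,\cdot)-f_\star(t,\cdot,\cdot)$ directly in the original variables, combining $\nrmxv{f(t,\cdot,\cdot)}\infty+\nrmxv{f_\star(t,\cdot,\cdot)}\infty=O\big(t^{-d\,\frac{1+A}{1-A}}\big)$ with Theorem~\ref{Thm:Main} as stated.
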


\subsection{Decay rates of the spatial density}

Here we consider the \emph{spatial density} $\rho_f(t,x)=\irdv{f(t,x,v)}$. For the sake of simplicity, we argue in rescaled variables given by \eqref{SSCoV}. To ensure that moments in~$v$ are finite, we also impose $m>\widetilde m_1$ as in \eqref{tilde:m1}.

Let us recall a classical interpolation inequality in kinetic theory (see for instance~\cite[Corollary~2]{MR1115549}).
\begin{lemma}\label{Lem:ClassicalInterpolation} If $g\in\mathrm L_+^1\big(\R^d\times\R^d\!,|v|^2\,dx\,dv\big)\cap\mathrm L^\infty\big(\R^d\times\R^d\!,dx\,dv)$, then we have
\[
\nrmx{\rho_g}{1+\frac2d}\le\mathcal C_d\,\nrmxv g\infty^\frac2{d+2}\(\irdxv{\kern-14pt|v|^2\,g(x,v)}\)^\frac d{d+2}
\]
with $\mathcal C_d:=2^{d/(d+2)}\,\frac{d+2}{2\,d}\,\big|\mathbb S^{d-1}\big|^{2/(d+2)}$.
\end{lemma}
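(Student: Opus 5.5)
The argument is pointwise in $x$: bound $\rho_g(x)$ by splitting the velocity integral at a radius $R(x)$ and optimizing, then integrate in $x$ with Hölder. Fix $x\in\R^d$ and $R>0$, and write
\[
\rho_g(x)=\int_{|v|\le R}g\,dv+\int_{|v|>R}g\,dv\le\frac{|\mathbb S^{d-1}|}d\,R^d\,\nrmxv g\infty+R^{-2}\,e(x),\qquad e(x):=\irdv{|v|^2\,g(x,v)}.
\]
The first term uses $|B_R|=|\mathbb S^{d-1}|\,R^d/d$, and the second uses $1\le|v|^2/R^2$ on $\{|v|>R\}$.

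Next, optimize in $R$. Minimizing $a\,R^d+b\,R^{-2}$, with $a=|\mathbb S^{d-1}|\,\nrmxv g\infty/d$ and $b=e(x)$, gives the optimal radius $R^{d+2}=2b/(d\,a)$. The minimal value is then $c_d\,a^{2/(d+2)}\,b^{d/(d+2)}$, where $c_d=(d/2)^{2/(d+2)}\frac{d+2}{d}\big(\frac{d}{2}\cdot\frac{2}{d}\big)$, so in practice $c_d=\frac{d+2}{d}\,(d/2)^{2/(d+2)}$ up to bookkeeping. This yields the pointwise bound
\[
\rho_g(x)\le c_d'\,\nrmxv g\infty^{\frac2{d+2}}\,|\mathbb S^{d-1}|^{\frac2{d+2}}\,e(x)^{\frac d{d+2}}.
\]

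Raise this bound to the power $p=1+2/d=(d+2)/d$. Then $e(x)^{\frac d{d+2}\,p}=e(x)$, so integrating in $x$ gives
\[
\irdx{\rho_g^{\,p}}\le (c_d')^p\,\big(|\mathbb S^{d-1}|\,\nrmxv g\infty\big)^{2/d}\irdxv{|v|^2\,g}.
\]
Taking the $1/p$ power produces the claimed inequality, with exponents $2/(d+2)$ and $d/(d+2)$.

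The only delicate point is tracking the constant to match the stated $\mathcal C_d=2^{d/(d+2)}\,\frac{d+2}{2d}\,|\mathbb S^{d-1}|^{2/(d+2)}$. To do this, I will redo the minimization carefully: at the optimum, $aR^d=\frac{2}{d}\,bR^{-2}$, so the minimum equals $\frac{d+2}{2}\,bR^{-2}$, into which the explicit $R$ is then substituted. If the constant does not match exactly, that is harmless for later use, but I expect a careful substitution to reproduce it. Measurability and finiteness of $e(x)$ for a.e. $x$ follow from Fubini, since $|v|^2g\in\mathrm L^1$. Where $e(x)=0$, we have $g(x,\cdot)=0$ a.e., so the bound holds trivially.
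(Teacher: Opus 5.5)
Your argument is essentially the paper's proof: split the velocity integral at $|v|=R$, bound the inner part by $\frac{|\mathbb S^{d-1}|}{d}R^d\|g\|_\infty$ and the outer part by $R^{-2}e(x)$, optimize in $R$, raise to the power $(d+2)/d$, and integrate in $x$. The constant does come out exactly, but fix one slip: at the optimum $aR^d=\frac2d\,bR^{-2}$, so the minimum is $\frac{d+2}{d}\,bR^{-2}$ (equivalently $\frac{d+2}{2}\,aR^d$), not $\frac{d+2}{2}\,bR^{-2}$; your value $c_d=\frac{d+2}{d}(d/2)^{2/(d+2)}$ is nevertheless right, and combined with the factor $d^{-2/(d+2)}$ coming from $a$ it gives $\frac{d+2}{d}\,2^{-2/(d+2)}\,|\mathbb S^{d-1}|^{2/(d+2)}=\mathcal C_d$, since $2^{d/(d+2)}/2=2^{-2/(d+2)}$.
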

\begin{proof} The proof is elementary. If $|v|\le R$, we estimate $\rho_g$ with $\nrmxv g\infty$, and $g\,|v|^2/R^2$ if $|v|>R$, so that
\[
\rho_g(x)\le\big|\mathbb S^{d-1}\big|\,\nrmxv g\infty\,\tfrac{R^d}d+\tfrac1{R^2}\irdv{|v|^2\,g(x,v)}\,.
\]
We minimize the right-hand side on $R>0$ and conclude by integrating with respect to $x\in\R^d$ after taking the power $(d+2)/d$ of both sides of the estimate.\end{proof}

We take $\gamma=\gammastar$ in~(\ref{g}) so that \hbox{$\irdxv{g_\star}=1$}. If $m<1$, we can write $\mathcal E[g]=Z_m\irdxv{\phi(g/g_\star)\,g_\star^m}$ with
\[
\phi(s):=\tfrac{Z_m^{-1}}{m-1}\(s^m-1-m\,(s-1)\)\,,\quad Z_m=\irdxv{g_\star^m}\,.
\]
The function $\phi:\R^+\to\R^+$ is invertible if it is restricted to $[1,+\infty)$. Let us denote by $\psi$ this inverse and notice that $\phi(s)\le t$ implies $s\le\psi(t)$ for any $s\in(0,+\infty)$. The following estimate is a straightforward consequence of Jensen's inequality.
\begin{lemma}\label{Lem:Jensen} If $m\in(\widetilde m_1,1)$, then we have
\be{v^2:m<1}
(1+A)\,\frac{1-m}{2\,m}\irdxv{|v|^2\,g}\le Z_m\,\psi\(Z_m^{-1}\,\mathcal E[g]\)
\ee
for any $g\in\mathrm L_+^1\big(\R^d\times\R^d,(1+|v|^2)\,dx\,dv\big)\cap\mathrm L^m\big(\R^d\times\R^d\!,dx\,dv)$.
\end{lemma}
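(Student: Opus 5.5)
The plan is to compare the kinetic energy $|v|^2$ with the pressure $\mathsf Q_\star$ of $g_\star$, then apply Jensen's inequality with respect to the probability measure $d\mu:=Z_m^{-1}\,g_\star^m\,dx\,dv$.

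\emph{Step 1: pointwise bound by the pressure.} Since $m<1$, we have $\gamma_\star>0$ and $A>0$. Hence
\[
\tfrac{1-m}m\,\mathsf Q_\star=g_\star^{m-1}\ge\tfrac{1-m}m\,\tfrac{1+A}2\,|v|^2 .
\]
Multiplying by $g\ge0$ and setting $s:=g/g_\star$ gives
\[
(1+A)\,\tfrac{1-m}{2\,m}\,|v|^2\,g\le g_\star^{m-1}\,g=s\,g_\star^m .
\]
After integration,
\[
(1+A)\,\tfrac{1-m}{2\,m}\irdxv{|v|^2\,g}\le Z_m\int s\,d\mu .
\]

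\emph{Step 2: integrability of the reference measure.} We must check that $\mu$ is a genuine probability measure, i.e. $Z_m<\infty$. We have $g_\star^m\sim(|x|^2+|v|^2)^{m/(m-1)}$ at infinity in $\R^{2d}$. This is integrable if and only if $2m/(1-m)>2d$, that is $m>d/(d+1)=\widetilde m_1$, which is exactly the hypothesis. This is the only place where the restriction $m>\widetilde m_1$ enters.

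\emph{Step 3: Jensen's inequality.} For $m<1$, $s\mapsto s^m$ is concave. Since it is divided by $m-1<0$, the function $\phi$ is convex, and $\phi\ge0$ with $\phi(1)=0$. Jensen's inequality then gives
\[
\phi\Big(\int s\,d\mu\Big)\le\int\phi(s)\,d\mu=Z_m^{-1}\,\mathcal E[g],
\]
using the representation $\mathcal E[g]=Z_m\irdxv{\phi(g/g_\star)\,g_\star^m}$ recalled above. For the right-hand side to make sense we need $g\in\mathrm L^1\cap\mathrm L^m$ and the moment $\int|v|^2g<\infty$. If the left-hand side of \eqref{v^2:m<1} is infinite, Step 1 shows $\int s\,d\mu=\infty$. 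Then superlinearity of $\phi$ forces $\mathcal E[g]=\infty$, and the inequality is trivial.

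\emph{Step 4: inverting $\phi$.} Since $\phi(\sigma)\le\tau$ implies $\sigma\le\psi(\tau)$ for any $\sigma>0$, we get $\int s\,d\mu\le\psi\big(Z_m^{-1}\mathcal E[g]\big)$. Combined with Step 1, this yields \eqref{v^2:m<1}.

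I expect no serious obstacle. The only delicate points are the integrability of $g_\star^m$ in Step 2 and the justification that the entropy identity in terms of $\phi$ holds for the given class of $g$; both are direct consequences of $m>\widetilde m_1$.
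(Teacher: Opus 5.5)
Your argument is the one the paper intends; the paper only says the bound is a straightforward consequence of Jensen's inequality. You bound $(1+A)\frac{1-m}{2m}|v|^2$ by the pressure $g_\star^{m-1}$, apply Jensen to the probability measure $d\mu=Z_m^{-1}g_\star^m\,dx\,dv$, and invert $\phi$ on $[1,\infty)$. Steps~1, 2 and~4 are fine, and Step~2 correctly identifies $m>\widetilde m_1$ as exactly the condition $Z_m<\infty$.

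There is, however, a bookkeeping slip in Step~3. The paper defines $\phi(s)=\frac{Z_m^{-1}}{m-1}\big(s^m-1-m(s-1)\big)$. With that $\phi$, the representation you cite, $\mathcal E[g]=Z_m\irdxv{\phi(g/g_\star)\,g_\star^m}$, gives $\int\phi(s)\,d\mu=Z_m^{-1}\irdxv{\phi(s)\,g_\star^m}=Z_m^{-2}\,\mathcal E[g]$, not $Z_m^{-1}\,\mathcal E[g]$. Followed literally, your argument therefore yields $(1+A)\frac{1-m}{2m}\irdxv{|v|^2g}\le Z_m\,\psi\big(Z_m^{-2}\mathcal E[g]\big)$. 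That implies the printed bound only after one checks $Z_m\ge1$.

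The printed inequality is exactly what Jensen gives when $\phi$ carries no factor $Z_m^{-1}$, i.e. $\phi(s)=\frac1{m-1}\big(s^m-1-m(s-1)\big)$, so that $\mathcal E[g]=Z_m\int\phi(g/g_\star)\,d\mu$. The inconsistency originates in the paper's normalization. Still, you should state which normalization you use rather than cite the representation and silently drop a factor $Z_m$.

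Two minor points:
\begin{itemize}
\item $\phi$ grows only linearly at infinity, not superlinearly. Linear growth with positive slope is all you need.
\item Since $\irdxv{|v|^2g}<\infty$ by assumption, the degenerate case is not an infinite left-hand side. It is $\int s\,d\mu=\infty$, which can occur because $g_\star^{m-1}$ also grows like $A|x|^2$ and no $x$-moment of $g$ is assumed. In that case the linear growth of $\phi$ gives $\mathcal E[g]=\infty$, and the inequality is trivial.
\end{itemize}
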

In the case $m>1$, we shall notice that
\be{v^2:m>1}
\tfrac12\,(1+A)\irdxv{|v|^2\,g}\le\mathcal E[g]+\mathcal H[g_\star]
\ee
If $g$ solves \eqref{Eqn:g}, we learn from Lemma~\ref{Lem:EEP} and Lemma~\ref{Lem:ClassicalInterpolation} combined with \eqref{v^2:m<1} and \eqref{v^2:m>1} that $\nrmx{\rho_g(t,\cdot)}{1+2/d}$ is uniformly bounded in terms of $g_0$. Taking \eqref{SSCoV} and \eqref{R} into account, this proves the following result.
\begin{corollary} Under the assumptions of Theorem~\ref{Thm:Main}, if $f$ is a solution of \eqref{Eqn:f} with $f_0\in\mathrm L^1_+\big(\R^d\times\R^d,\,(|x|^2+|v|^2)\,dx\,dv\big)\cap\mathrm L^\infty\big(\R^d\times\R^d)$, then for any $t>0$, we have
\[
\nrmx{\rho_f(t,\cdot)}{1+\frac2d}\le\mathcal C[f_0]\,\big(1+(1-A)\,t\big)^{-\frac{3-d+d\,m}{(d+2)\,(m-m_1)}}
\]
for some explicit constant $\mathcal C[f_0]$ and with $A$ as in \eqref{R}.\end{corollary}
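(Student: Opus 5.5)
The plan is to obtain a uniform-in-time bound on $\nrmx{\rho_g(t,\cdot)}{1+2/d}$ in self-similar variables and then transfer it back to $\rho_f$ through \eqref{SSCoV}--\eqref{R}.

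\emph{Bound in self-similar variables.} Let $g$ be related to $f$ by \eqref{SSCoV} with $R_0=1$, so that $g_0$ is given by \eqref{g0}. By Lemma~\ref{Lem:EEP}, $t\mapsto\mathcal E(t)$ is nonincreasing, hence $\mathcal E(t)\le\mathcal E[g_0]$ for all $t\ge0$. The quantity $\mathcal E[g_0]$ is finite because $f_0$ has finite second moments and lies in $\mathrm L^\infty$. Since $\psi$ is increasing, \eqref{v^2:m<1} for $m<1$ and \eqref{v^2:m>1} for $m>1$ bound $\irdxv{|v|^2\,g(t,x,v)}$ uniformly in $t$ by an explicit function of $\mathcal E[g_0]$. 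Next we need a uniform bound on $\nrmxv{g(t,\cdot,\cdot)}\infty$. Under \eqref{Initial:f}, Proposition~\ref{Prop:Uniqueness} gives $g\le g_{\gamma_2}$, and the right-hand side is bounded. For a general bounded $f_0$, I would instead use the $\mathrm L^\infty$ bound given by the Maximum Principle: by Corollary~\ref{Cor:MaximumPrinciple}, $\nrmxv{f(t)}\infty\le\nrmxv{f_0}\infty$ for the original equation. This is the step I expect to be delicate, because \eqref{SSCoV} multiplies sup norms by $R^{d(1+A)}$, which grows. The fix I would try first is to compare $g$ with a stationary profile $g_\gamma$ placed above $g_0$. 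With $\gamma$ chosen so that $g_0\le g_\gamma$, which is possible whenever $f_0$ has the required decay, comparison again gives $g\le g_\gamma$ uniformly in time. Lemma~\ref{Lem:ClassicalInterpolation} then yields $\nrmx{\rho_g(t,\cdot)}{1+2/d}\le\mathcal C$, uniformly in $t$.

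\emph{Back to the original variables.} Integrating \eqref{SSCoV} in $v$, the substitution $w=v/R^A-x/R$ contributes a Jacobian $R^{dA}$, so
\[
\rho_f(t,x)=R^{-d}\,\rho_g\big(\log R,\tfrac xR\big).
\]
Taking the $\mathrm L^p$ norm in $x$ with $p=1+2/d$ gives
\[
\nrmx{\rho_f(t,\cdot)}p=R^{-d(p-1)/p}\,\nrmx{\rho_g}p=R^{-\frac2{d+2}}\,\nrmx{\rho_g}p .
\]
With $R=(1+(1-A)\,t)^{1/(1-A)}$, the decay exponent is $\frac2{(d+2)(1-A)}$. Using the definition of $A$, one has $1-A=\frac{2(d\,m-d+1)}{3-d+d\,m}=\frac{2\,d\,(m-m_1)}{3-d+d\,m}$. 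It remains to check this against the stated exponent $\frac{3-d+d\,m}{(d+2)(m-m_1)}$, possibly up to a factor $d$ depending on the normalization of $m-m_1$; this is a routine computation that I would carry out carefully at that point.
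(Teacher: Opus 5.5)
Your strategy is the paper's. Monotonicity of $\mathcal E$ (Lemma~\ref{Lem:EEP}) together with \eqref{v^2:m<1} or \eqref{v^2:m>1} gives a uniform bound on $\irdxv{|v|^2\,g}$. The comparison $g\le g_{\gamma_2}$, which follows from \eqref{Initial:f}, bounds $\nrmxv g\infty$ uniformly. Lemma~\ref{Lem:ClassicalInterpolation} then bounds $\nrmx{\rho_g(t,\cdot)}{1+2/d}$ uniformly in $t$, and you rescale with \eqref{SSCoV}--\eqref{R}.

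Your detour about general bounded $f_0$ is unnecessary. The statement assumes \eqref{Initial:f}, so $g\le g_{\gamma_2}$ is already available and nothing there is delicate. Note also that Lemma~\ref{Lem:Jensen} requires $m>\widetilde m_1$ when $m<1$. This is automatic here: for $m\le\widetilde m_1$ the lower bound $f_0\ge g_{\gamma_1}(x,v-x)$ is incompatible with $f_0$ having finite second moments.

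There is, however, a concrete arithmetic error in the last step, and it is the source of the factor $d$ you could not account for. Your identity $\nrmx{\rho_f(t,\cdot)}p=R^{-d(p-1)/p}\,\nrmx{\rho_g(\log R,\cdot)}p$ is correct. But for $p=1+2/d$ one has $d\,(p-1)/p=\frac{2d}{d+2}$, not $\frac2{d+2}$. With this correction and your (correct) formula $1-A=\frac{2d\,(m-m_1)}{3-d+d\,m}$, you get
\[
R(t)^{-\frac{2d}{d+2}}=\big(1+(1-A)\,t\big)^{-\frac{2d}{(d+2)(1-A)}}\quad\mbox{and}\quad\frac{2d}{(d+2)(1-A)}=\frac{3-d+d\,m}{(d+2)\,(m-m_1)}\,,
\]
which is exactly the stated exponent. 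There is no normalization freedom in $m-m_1$, since $m_1=1-1/d$ is fixed. As written, your exponent $\frac{3-d+d\,m}{d\,(d+2)\,(m-m_1)}$ is smaller by a factor $d$, giving a weaker decay when $d\ge2$. The argument reaches the claimed rate only once this computation is fixed.
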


\section{Further results}\label{Sec:Further}

In this section, we collect various observations, mostly at formal level, which explains why the fundamental solution defined by \eqref{fstar} is of interest as a model case for nonlinear kinetic equations.

\subsection{Formal diffusion limit}

In order to investigate the diffusion limit, or overdamped regime, let us consider the parabolic scaling applied to a solution $f$ of \eqref{Eqn:f} such that
\[
f(t,x,v)=\(\frac{\varepsilon^{\,2\,\eta}}{R(\varepsilon\,t)}\)^dh_\varepsilon\(\varepsilon^2\,\tau(\varepsilon\,t),\varepsilon^{\eta+1}\,x,\frac{\varepsilon^{\eta-1}\,v}{R(\varepsilon\,t)}\)\,,
\]
as $\varepsilon\to0$, with $2\,(d\,m-d+1)\,\eta=3$, and assume that
\[
R(s)=\(1+s/\alpha\)^\alpha\;\forall\,s\ge0\quad\mbox{with}\;1/\alpha=d\,m-d+2=d\,(m-m_c)
\]
so that $(\tau,x,v)\mapsto h_\varepsilon(\tau,x,v)$ solves
\be{Eqn:heps}
\varepsilon\,\frac{d\tau}{ds}(s)\,\partial_\tau h_\varepsilon+R(s)\,v\cdot\nabla_xh_\varepsilon=\frac{\sigma(s)}\varepsilon\(\Delta_vh_\varepsilon^m+\nabla_v\cdot(v\,h_\varepsilon)\)
\ee
where $\sigma(s):=\(1+s/\alpha\)^{-1}$, $\tau=\tau(s)$ has to be chosen appropriately, and $s=\varepsilon\,t$. Notice that the rescaling differs from \eqref{R}.

At formal level, see~\cite{MR2338354,BDM2025}, a standard Hilbert expansion shows that
\[
h_\varepsilon(\tau,x,v)\approx H(\tau,x,v):=\(\mu(\tau,x)+\frac{1-m}{2\,m}\,|v|^2\)_+^\frac1{m-1}\,,
\]
where $\mu(\tau,x)=\mu_1\,\big(\rho(\tau,x)\big)^{k-1}$ is obtained by solving
\[
\rho(\tau,x)=\irdv{\(\mu(\tau,x)+\tfrac{1-m}{2\,m}\,|v|^2\)_+^\frac1{m-1}}=\(\frac{\mu(\tau,x)}{\mu_1}\)^{\frac d2+\frac1{m-1}}
\]
with $k=1+2\,\alpha\,(m-1)$ as in~\eqref{k} and an explicit numerical constant $\mu_1>0$. The spatial density $\rho_\varepsilon(\tau,x):=\irdv{h_\varepsilon(\tau,x,v)}$ solves the \emph{local mass conservation} equation
\[
\frac1{R}\,\frac{d\tau}{ds}\,\partial_\tau\rho_\varepsilon+\nabla_x\cdot j_\varepsilon=0
\]
obtained by formally integrating~\eqref{Eqn:heps} with respect to $v$, while the flux $j_\varepsilon(\tau,x):=\frac1\varepsilon\irdv{v\,h_\varepsilon(\tau,x,v)}$ obeys to
\[
\frac\varepsilon{R}\,\frac{d\tau}{ds}\,\partial_\tau j_\varepsilon+\nabla_x\cdot\irdv{v\otimes v\,h_\varepsilon(\tau,x,v)}=-\,\frac\sigma{R}\,j_\varepsilon\,.
\]
This equation is obtained by multiplying \eqref{Eqn:ht} by $v$ and formally integrating with respect to $v$. Heuristically, we may drop the $O(\varepsilon)$ term and write, at leading order in $\varepsilon$,
\begin{align*}
&-\,\frac{\sigma(s)}{R(s)}\,j_\varepsilon(\tau,x)\approx\nabla_x\cdot\irdv{v\otimes v\,H(\tau,x,v)}\\
&=\frac{1}{d}\,\nabla_x\irdv{|v|^2\(\mu(\tau,x)+\tfrac{1-m}{2\,m}\,|v|^2\)_+^\frac1{m-1}}=\nu_1\,\nabla_x\(\rho(\tau,x)^k\)
\end{align*}
for some explicit numerical constant $\nu_1>0$. Collecting our estimates, we obtain that $\rho_\varepsilon\approx\rho$ solves the nonlinear diffusion equation
\be{Eqn:rho}
\partial_\tau\rho=\Delta_x\rho^k\quad\forall\,(\tau,x)\in\R^+\times\R^d
\ee
in the limit as $\varepsilon\to0$ if we choose the time scale $s\mapsto\tau(s)$ such that
\[
\frac{d\tau}{ds}=\nu_1\,\frac{R(s)^2}{\sigma(s)}
\]
with the condition $\tau(0)=0$, \emph{i.e.},
\[
\tau(t)=\tfrac{\alpha\,\nu_1}{2\,(1+\alpha)}\(\(1+t/\alpha\)^{2\,(1+\alpha)}-1\)\quad\forall\,t\ge0\,.
\]
With this choice, the initial datum for \eqref{Eqn:rho} is given in terms of~$f_0$ by $\rho(\tau=0,x)=\varepsilon^{-d\,(\eta+1)}\irdv{f_0\big(\varepsilon^{-(\eta+1)}\,x,v\big)}$ where~$f_0$ stands for the initial datum for \eqref{Eqn:f}. The self-similar Baren\-blatt-Pattle solution $\rho$ of \eqref{Eqn:rho} with initial datum $\rho(0,\cdot)=\boldsymbol\delta$ takes the form $\rho(\tau,x)=(\tau/\beta)^{-d\,\beta}\,\rho_\star\big((\tau/\beta)^{-\beta}\,x)\big)$ where $1/\beta=d\,(k-1)+2$ and, for some integration constant $\mathsf c_\star>0$ such that $\nrmx{\rho_\star}1=1$,
\[
\rho_\star(x)=\(\mathsf c_\star+\tfrac{1-k}{2\,k}\,|x|^2\)_+^\frac1{k-1}\quad\forall\,x\in\R^d\,.
\]
Assuming that $\alpha$ is positive, that is, $m>m_c$, we notice that $k-1$ and $m-1$ have the same sign because \hbox{$k-1=2\,\alpha\,(m-1)$}. However, this formal computation requires at least a second moment, which means $m>\widetilde m_1$, or equivalently, $k>d/(d+2)$. This last condition ensures that $\irdx{|x|^2\,\rho_\star(x)}$ is finite.

\subsection{Translation and Galilean invariance}

Although \eqref{Eqn:f} is nonlinear, it makes sense to consider solutions with measure valued initial data corresponding to non-centred Dirac distributions. Since \eqref{Eqn:f} is invariant under translations in position, that is, $(x,v)\mapsto(x-x_0,v)$, and under Galilean transformations, that is, $(x,v)\mapsto(x-t\,v_0,v-v_0)$, the solutions are simple.
\begin{proposition}\label{Prop:X0v0} The function $t\mapsto f_\star(t,x-x_0-t\,v_0,v-v_0)$ solves \eqref{Eqn:f} with measure initial datum $\boldsymbol\delta(x-x_0,v-v_0)$ for any $(x_0,v_0)\in\R^d\times\R^d$.\end{proposition}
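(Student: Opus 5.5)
The plan is to reduce the statement to two invariance properties of \eqref{Eqn:f}: invariance under spatial translations and invariance under Galilean boosts. Then we track what each transformation does to the initial datum.

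First, set $\tilde f(t,x,v):=f(t,x-x_0,v)$. Since \eqref{Eqn:f} has coefficients independent of $x$, $\tilde f$ solves \eqref{Eqn:f} whenever $f$ does.

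Second, set $F(t,x,v):=f(t,x-t\,v_0,v-v_0)$ and compute directly. With $y=x-t\,v_0$ and $w=v-v_0$, the chain rule gives
\[
\partial_tF=\partial_tf-v_0\cdot\nabla_xf,\qquad v\cdot\nabla_xF=(w+v_0)\cdot\nabla_xf,\qquad \Delta_vF^m=\Delta_wf^m .
\]
Adding the first two, the $v_0$ terms cancel, so
\[
\partial_tF+v\cdot\nabla_xF=\partial_tf+w\cdot\nabla_yf=\Delta_wf^m=\Delta_vF^m .
\]
Composing the two transformations then shows that $t\mapsto f_\star(t,x-x_0-t\,v_0,v-v_0)$ solves \eqref{Eqn:f} for $t>0$. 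Here we use that $f_\star$ is a solution by Lemma~\ref{Lem:Kolmogorov}. The composition of the two changes of variables is an affine, measure-preserving map of phase space for each fixed $t$. Hence the weak (distributional) formulation, and the pressure formulation \eqref{eq:P} on the support, transfer without difficulty.

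Third, we identify the initial datum. By Lemma~\ref{Lem:Kolmogorov}, $f_\star(t,\cdot,\cdot)\to\boldsymbol\delta$ in the sense of measures as $t\to0^+$. This also follows from \eqref{fKV}, since the scales $t^{1/(1-A)}$ and $t^{A/(1-A)}$ tend to $0$ because $0<A<1$, and mass is conserved. Take a test function $\varphi\in\mathcal C_b(\R^d\times\R^d)$ and change variables:
\[
\iint f_\star(t,x-x_0-t\,v_0,v-v_0)\,\varphi(x,v)\,dx\,dv=\iint f_\star(t,y,w)\,\varphi(y+x_0+t\,v_0,w+v_0)\,dy\,dw .
\]
By uniform continuity on compacts and the tightness of $f_\star(t,\cdot,\cdot)$ as $t\to0$, this tends to $\varphi(x_0,v_0)$. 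That is, the initial datum is $\boldsymbol\delta(x-x_0,v-v_0)$.

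The only delicate point is this last one: the extra drift $t\,v_0$ in the test function must be controlled uniformly in $t$. This is handled by the tightness of the concentrating family $f_\star(t,\cdot,\cdot)$, which follows from \eqref{fKV}. The rest is an elementary chain-rule computation.
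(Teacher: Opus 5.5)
Your proposal is correct and follows the paper's argument. The paper justifies the proposition only by the invariance of \eqref{Eqn:f} under translations in position and Galilean transformations; you add the explicit chain-rule check and the identification of the initial datum, which the paper leaves implicit. The drift $t\,v_0$ is not actually delicate: rescaling with \eqref{fKV} turns the integral into one against $f_\star(1,\cdot,\cdot)$, and dominated convergence then gives the limit $\varphi(x_0,v_0)$ directly for any bounded continuous $\varphi$.
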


\subsection{Linearization, eigenfunctions and asymptotic rates}\label{Sec:Linear}

If $g$ solves \eqref{Eqn:g}, let us consider a perturbation $h$ such that $g(t,x,v)=g_\star(x,v)+\varepsilon\,h(t,x,v)$ and consider the limit as $\varepsilon\to0$. At order $\varepsilon$, we obtain the linear evolution equation
\be{Eqn:h}
\partial_th=\mathcal L\,h
\ee
where the operator $\mathcal L$ is defined by
\[
\mathcal L\,h:=m\,\Delta_v\big(g_\star^{m-1}\,h\big)+(1+A)\,\nabla_v\cdot(v\,h)-v\cdot\nabla_xh+A\,x\cdot\nabla_vh\,.
\]
We assume that $\nrmxv{g_\star}1=1$ and recall that by~(\ref{g}),
\[
m\,\big(g_\star(x,v)\big)^{m-1}:=\((1-m)\,\(\gammastar+\tfrac{1+A}2\,|v|^2+\tfrac{1+A}2\,A\;|x|^2\)\)_+\,.
\]
By taking the limit as $\varepsilon\to0$ in Lemma~\ref{Lem:EEP} and collecting terms at order $\varepsilon^2$, we obtain
\be{L:accretive}
\begin{aligned}
\frac d{dt}&\irdxv{|h|^2\,g_\star^{m-2}}\\
&=\,-\,2\,m\irdxv{g_\star\,\big|\nabla_v\big(g_\star^{m-2}\,h\big)\big|^2}\,,
\end{aligned}
\ee
which can also be recovered directly from \eqref{Eqn:h}. The case $m>1$ raises various difficulties due to the compactness of the support of $g_\star$ and is omitted here.
\begin{lemma}\label{Lem:Spectrum} Let $d\ge1$ and $m\in(m_1,1)$. The operator~$\mathcal L$ defined on $\mathrm L^2(\R^d\times\R^d,g_\star^{m-2}\,dx\,dv)$ is such that
\[
\mathrm{Spec}(\mathcal L)\subset\big\{\lambda\in\C\,:\,\mathrm{Re}(\lambda)\le0\big\}
\]
and the kernel of $\mathcal L$ is generated by $h_0:=g_\star^{2-m}$.\end{lemma}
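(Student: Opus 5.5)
Three things are needed: that $\mathcal L h_0=0$, that the spectrum lies in the closed left half-plane, and that the kernel is exactly $\mathrm{span}(h_0)$. All three will follow from the quadratic identity~\eqref{L:accretive}, i.e.\ from a dissipativity statement in the weighted space $\mathcal X:=\mathrm L^2(\R^d\times\R^d,g_\star^{m-2}\,dx\,dv)$.

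\emph{Step 1: decomposition of $\mathcal L$.} Set $u:=g_\star^{m-2}h$ and use $m\,\nabla_v g_\star^{m-1}=-(1+A)\,v\,g_\star$, which holds because $\mathsf Q_\star$ is the pressure of $g_\star$ and $\nabla_v\mathsf Q_\star=(1+A)\,v$. This gives
\[
m\,\Delta_v\big(g_\star^{m-1}h\big)+(1+A)\,\nabla_v\cdot(v\,h)=m\,\nabla_v\cdot\big(g_\star\,\nabla_v u\big).
\]
So $\mathcal L=\mathcal S+\mathcal T$, where $\mathcal S h=m\,\nabla_v\cdot(g_\star\nabla_v u)$ and $\mathcal T h=-\,v\cdot\nabla_x h+A\,x\cdot\nabla_v h$.

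\emph{Step 2: adjointness properties in $\mathcal X$.} The transport field $(v,-A\,x)$ is divergence free, and it annihilates $|v|^2+A\,|x|^2$, hence annihilates the weight $g_\star^{m-2}$. Therefore $\mathcal T$ is antisymmetric in $\mathcal X$. Integrating by parts, $\langle\mathcal S h,h\rangle_{\mathcal X}=-m\irdxv{g_\star|\nabla_v u|^2}$, so $\mathcal S$ is symmetric and nonpositive. Together these give $\mathrm{Re}\langle\mathcal L h,h\rangle_{\mathcal X}\le0$, which is~\eqref{L:accretive}. The same computation applies to the formal adjoint $\mathcal L^*=\mathcal S-\mathcal T$. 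I would then argue that $\mathcal L$, defined on the closure of $C_c^\infty$, is maximal dissipative by the Lumer--Phillips theorem: one needs the range of $\lambda-\mathcal L$ to be dense for $\lambda>0$. Density follows from dissipativity of $\mathcal L^*$, since any $h$ orthogonal to that range satisfies $\mathcal L^*h=\lambda h$, and then $0\ge\mathrm{Re}\langle\mathcal L^* h,h\rangle_{\mathcal X}=\lambda\|h\|_{\mathcal X}^2$ forces $h=0$. Hille--Yosida then gives $\|(\lambda-\mathcal L)^{-1}\|\le1/\mathrm{Re}\,\lambda$ for $\mathrm{Re}\,\lambda>0$, which is the spectral inclusion.

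This step is the main technical obstacle. The coefficients grow linearly, and the diffusion coefficient $g_\star^{m-1}$ grows quadratically at infinity (recall $m<1$), so the integrations by parts have to be justified for an operator core. I would try cut-offs $\chi(\varepsilon(|x|^2+|v|^2))$. The commutator terms are controlled because $g_\star\,|\nabla_v\chi_\varepsilon|^2\,g_\star^{2(m-2)}$ is comparable to $g_\star^{m-2}\,\varepsilon^2(|x|^2+|v|^2)\,g_\star^{m-1}$ on the annulus $|x|^2+|v|^2\approx1/\varepsilon$, and this is bounded by a constant times $g_\star^{m-2}$ there, since $g_\star^{m-1}\approx1/\varepsilon$ on that annulus. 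I also need $h_0=g_\star^{2-m}$ to lie in $\mathcal X$. This amounts to $\int g_\star^{2-m}<\infty$, i.e.\ $\int(1+|z|^2)^{-(2-m)/(1-m)}\,dz<\infty$ in dimension $2d$. Since $(2-m)/(1-m)>1/(1-m)$, this is implied by the integrability of $g_\star$, which Lemma~\ref{Lem:Kolmogorov} gives for $m>m_1$.

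\emph{Step 3: the kernel.} For $h_0=g_\star^{2-m}$ we get $u=1$, so $\mathcal S h_0=0$. Also $\mathcal T h_0=0$, because $h_0$ is a function of $|v|^2+A|x|^2$. Hence $h_0$ lies in the kernel. Conversely, suppose $\mathcal L h=0$. Taking the real part of $\langle\mathcal L h,h\rangle_{\mathcal X}$ and using Step 2 gives $\irdxv{g_\star|\nabla_v u|^2}=0$, so $u=u(x)$ does not depend on $v$. Then $\mathcal S h=0$, and hence $\mathcal T h=0$. Substituting $h=g_\star^{2-m}u(x)$ and using that $\mathcal T$ kills $g_\star^{2-m}$, we obtain $v\cdot\nabla_x u(x)=0$ for all $v$. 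Thus $\nabla_x u=0$, $u$ is constant, and $h$ is a multiple of $h_0$. (If the kernel is to be understood modulo mass, one further notes that $h_0$ generates it, matching the statement.)
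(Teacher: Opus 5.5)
Your proof is correct. For the kernel it follows the paper's argument, and for the spectral inclusion it takes a genuinely different and stronger route.

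\textbf{Kernel.} The paper also uses \eqref{L:accretive} to get $\nabla_v(g_\star^{m-2}h)=0$, writes $h=\rho(x)\,h_0$, and concludes from $v\cdot\nabla_x\rho=0$. Its extra step about the complement of the support of $g_\star$ is empty for $m<1$, since $g_\star>0$ everywhere. The paper obtains $h_0$ as $\partial_\gamma g_\gamma$ at $\gamma=\gammastar$, up to a constant. You instead verify $\mathcal L h_0=0$ directly through $\mathcal L=\mathcal S+\mathcal T$, and you also check $h_0\in\mathrm L^2(g_\star^{m-2})$, which the paper leaves implicit.

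\textbf{Spectral inclusion.} The paper only reads off from \eqref{L:accretive} that $\mathcal L$ has no \emph{eigenvalue} with positive real part. In infinite dimension this does not exclude residual or continuous spectrum: a dissipative operator that is not maximal can have the whole right half-plane as residual spectrum without any eigenvalue. Your Lumer--Phillips argument closes exactly this gap. You use dissipativity of $\mathcal L$ and of $\mathcal L^*=\mathcal S-\mathcal T$ in the weighted space, and the fact that an element orthogonal to the range of $\lambda-\mathcal L$ must vanish. This gives maximal dissipativity, a contraction semigroup, and the bound $\|(\lambda-\mathcal L)^{-1}\|\le1/\mathrm{Re}\,\lambda$, which is the stated inclusion.

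The price is the justification you flag. The element $h$ orthogonal to the range is a priori only a distributional solution of $\mathcal L^*h=\lambda h$ in $\mathrm L^2(g_\star^{m-2})$. You therefore need local regularity before testing against $\chi_\varepsilon^2h$. H\"ormander hypoellipticity supplies it, because $g_\star$ is smooth and positive for $m<1$. After that, two estimates close the argument:
\begin{itemize}
\item your bound $g_\star^{m-1}|\nabla_v\chi_\varepsilon|^2=O(1)$ on the annulus, combined with a Young inequality on the cross term $g_\star\,\chi_\varepsilon u\,\nabla_v\chi_\varepsilon\cdot\nabla_v u$, handles the diffusion part;
\item taking $\chi_\varepsilon$ to be a function of $|v|^2+A|x|^2$ makes it commute with $\mathcal T$, which removes the transport commutator.
\end{itemize}

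\textbf{Minor slip in Step~1.} The identity you need is $\nabla_v g_\star^m=-(1+A)\,v\,g_\star$, equivalently $m\,g_\star^{m-2}\nabla_v g_\star=-(1+A)\,v$. It is not $m\,\nabla_v g_\star^{m-1}=-(1+A)\,v\,g_\star$, which is wrong as written. The decomposition $m\,\Delta_v(g_\star^{m-1}h)+(1+A)\nabla_v\cdot(v\,h)=m\,\nabla_v\cdot(g_\star\nabla_v u)$ that you derive is nevertheless correct.
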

\begin{proof} A derivative with respect to $\gamma$ of both sides of \eqref{Eqn:g} written for the stationary solution $g_\star$ provides us with an eigenfunction with eigenvalue $0$, that can be written as $\partial_\gamma g_\star$ at $\gamma=\gammastar$, with $g_\star$ as in~(\ref{g}). Up to a multiplicative constant, this function is $h_0$. Any function $h$ such that $\mathcal L\,h=0$ satisfies $\nabla_v\(g_\star^{m-2}\,h\)=0$ a.e.~on the support of $g_\star$ by \eqref{L:accretive}. Hence, we have to solve $\mathcal L\,h=0$ with
\[
h(x,v)=\rho(x)\,h_0(x,v)\quad\forall\,(x,v)\in\R^d\times\R^d
\]
for some function $\rho$. We deduce from $0=\mathcal L\,h=-\,h_0\,v\cdot\nabla_x\rho$ that $\rho$ is constant on the support of $g_\star$. Testing the equation $\mathcal L\,h=0$ with $\big(|v|^2+A\,|x|^2\big)\,h$ shows that $h\equiv0$ on $\R^d\times\R^d\setminus\mathrm{supp}(g_\star)$. This proves that the kernel of $\mathcal L$ is generated by $h_0$ and $\lambda_0=0$ is an eigenvalue of $\mathcal L$. Finally, from \eqref{L:accretive}, we read that $\mathcal L$ has no eigenvalue $\lambda\in\C$ such that \hbox{$\mathrm{Re}(\lambda)>0$}, which completes the proof of Lemma~\ref{Lem:Spectrum}.\end{proof}

From the invariances of \eqref{Eqn:f}, we can identify several real eigenvalues $\lambda\in\R$, of~$(-\mathcal L)$ such that $\lambda>0$. Indeed, by constructing special solutions $g$ of \eqref{Eqn:g} such that
\[
g(t,x,v)-g_\star(x,v)=e^{-\lambda\,t}\,h(x,v)+o\big(e^{-\lambda\,t}\big)\quad\mbox{as}\quad t\to+\infty\,,
\]
and passing to the limit in the equation at main order, we find explicit solutions of $\mathcal L\,h+\lambda\,h=0$. In practice, we are able to identify three eigenvalues $\lambda_1$, $\lambda_2$ and $\lambda_3$ whose values eventually depend on $m$. The indices $i$ of $\lambda_i$ do not reflect any order in their values.
\\[4pt]
$\bullet$ \emph{A mode based on scaling}. With $R$ as in \eqref{R} such that $R(t_0)=\tilde R_0\neq1$, the function$f(t,x,v)=f_\star(t+t_0,x,v)$ solves \eqref{Eqn:f} with initial datum
\[
f_0(x,v)=f_\star(t_0,x,v)=\tilde R_0^{-d\,(1+A)}\,g_\star\big(\tilde R_0^{-1}\,x,\tilde R_0^{-A}\,v-\tilde R_0^{-1}\,x\big)\,.
\]
Using the rescaling of \eqref{SSCoV}, the corresponding solution $g$ of Eqs.~(\ref{Eqn:g})-(\ref{g0}) is such that
\[
g(\tau,x,v)=G_{\rrho(\tau)}(x,v)\,,\quad\!G_\rrho(x,v):=\rrho^{-d\,(1+A)}\,g_\star\(\frac x\rrho,\frac v{\rrho^A}-\frac x\rrho\)
\]
with $\rrho(\tau):=\big(1+\(\tilde R_0^{1-A}-1\)e^{-(1-A)\,\tau}\big)^{1/(1-A)}$. Using the fact that \hbox{$\lim_{\tau\to+\infty}\rrho(\tau)=1$}, we can identify $\partial_\rrho G_\rrho$ at $\rrho=1$ as an eigenfunction, that is, $d\,(1+A)\,g_\star+x\cdot(\nabla_xg_\star-\nabla_vg_\star)+A\,v\cdot\nabla_vg_\star=2\,d\,(1-m)\,h_1$ with
\[
\frac{h_1(x,v)}{h_0(x,v)}=\frac Bm\(\gammastar+(B+A\,C)\,|v|^2+C\, x\cdot v+A\,(B-C)\,|x|^2\)\,,
\]
$B=(1+A)/2$ and $1/C=d\,(1-m)$. Since $\rrho(\tau)-1\sim e^{-(1-A)\,\tau}$ as $\tau\to+\infty$, by writing that
\[
g(\tau,x,v)=g_\star(x,v)+2\,d\,(1-m)\,h_1(x,v)\,\big(\rrho-1\big)+O\(\big(\rrho-1\big)^2\)\,,
\]
we also identify $\lambda_1=1-A$ as an eigenvalue of $-\,\mathcal L$.
\\[4pt]
$\bullet$ \emph{A mode based on translations with respect to $v$ in the phase space}. The solution $g$ of \eqref{Eqn:g} with initial datum $f_0(x,v)=g_\star(x,v-v_0)$ for any $v_0\neq0$ is given up to $o\big(e^{-A\,\tau}\big)$ terms by
\[
g(\tau,x,v)\approx g_\star\(x-\tfrac1{1-A}\,e^{-A\,\tau}\,v_0,v+\tfrac A{1-A}\,e^{-A\,\tau}\,v_0\)
\]
and we identify $\lambda_2=A$ as an eigenvalue of $-\,\mathcal L$ corresponding to any of the eigenfunctions $A\,\partial_{v_i}g_\star-\partial_{x_i}g_\star$, that is, of
\[
h_{2,i}(x,v):=(v_i-x_i)\,h_0(x,v)\,,\quad i=1\,,2\ldots d\,.
\]
$\bullet$ \emph{A mode based on translations with respect to $x$ in the phase space}. The solution $g$ of \eqref{Eqn:g} with initial datum $f_0(x,v)=g_\star(x-x_0,v)$ is given by
$g(\tau,x,v)=g_\star\big(x-e^{-\tau}\,x_0,v+e^{-\tau}\,x_0\big)$ for any $x_0\neq0$ so that we can identify $\lambda_3=1$ as an eigenvalue of~$-\,\mathcal L$ corresponding to any of the eigenfunctions $\partial_{v_i}g_\star-\partial_{x_i}g_\star$ which can also be written as
\[
h_{3,i}(x,v):=(v_i-A\,x_i)\,h_0(x,v)\,,\quad i=1\,,2\ldots d\,.
\]

A numerical computation of the spectrum of $\mathcal L$ is shown in Fig.~\ref{F2}, which suggests that, at least for some values of $m$, the operator $\mathcal L$ is sectorial and
\[
\mathrm{Spec}(\mathcal L)\setminus\{0\}\subset\big\{\lambda\in\C\,:\,\mathrm{Re}(\lambda)\le-\,\min\{A,1-A\}\big\}\,.
\]
A simpler question would be to prove that
\[
\sup\big\{\mathrm{Re}(\lambda)\,:\,\lambda\in\mathrm{Spec}(\mathcal L)\setminus\{0\}\big\}<0\,.
\]
Coming back to the nonlinear evolution problem \eqref{Eqn:g}, it is an open question to decide whether there is some $\lambda_\star>0$ such that
\be{lambdaStar}
\limsup_{t\to+\infty}e^{\,\lambda_\star t}\,\mathcal E(t)<+\infty
\ee
for any function $g$ satisfying the assumptions of Theorem~\ref{Thm:Main}, or not. One can also wonder if $\lambda_\star=\min\{A,1-A\}$ and ask if \eqref{lambdaStar} is also true under more general conditions on the initial datum.
\begin{figure}
\begin{center}
\includegraphics[width=4cm]{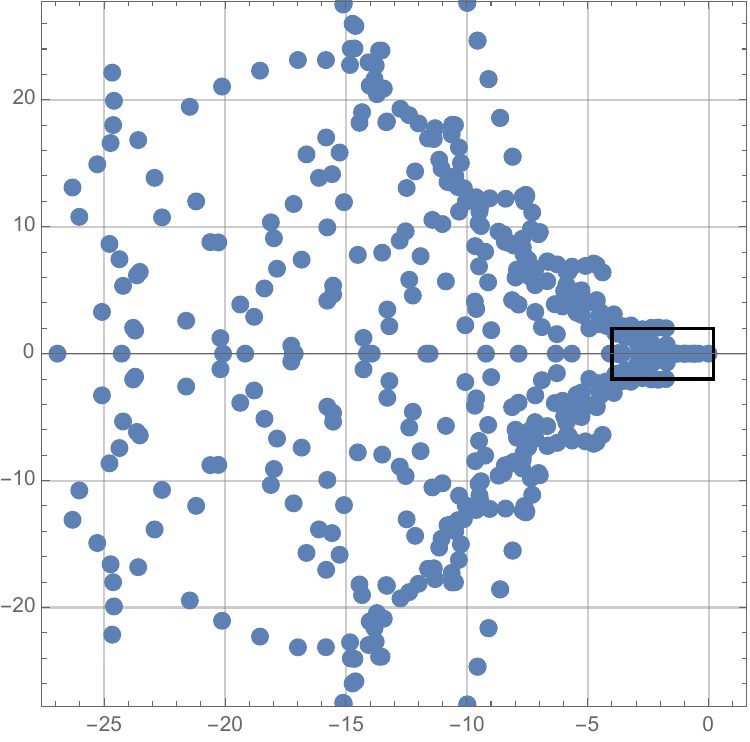}\includegraphics[width=4cm]{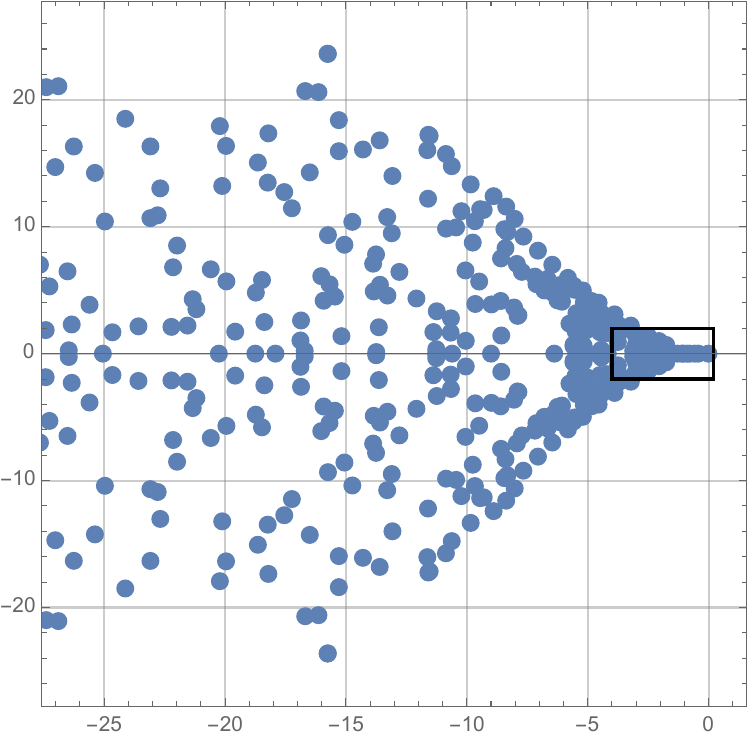}
\includegraphics[width=4cm]{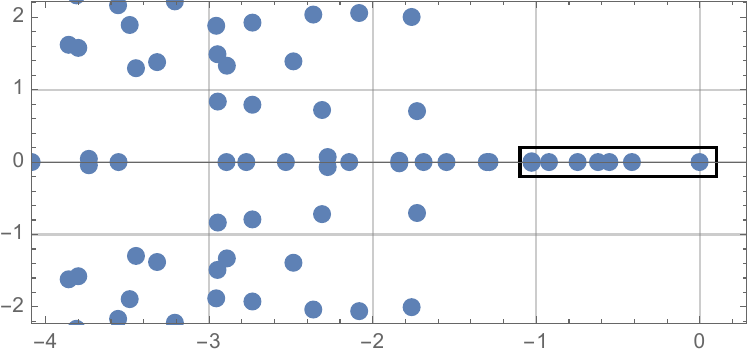}\includegraphics[width=4cm]{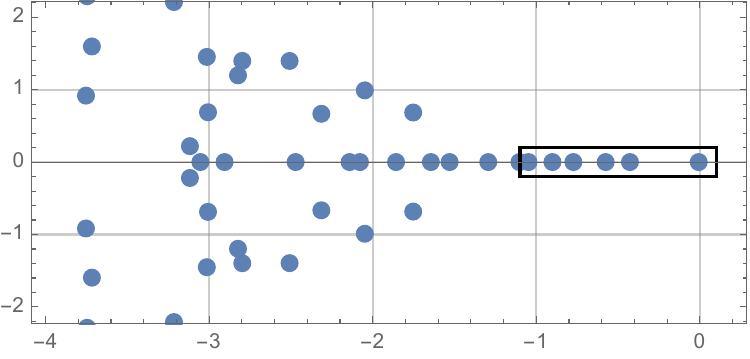}
\includegraphics[width=4cm]{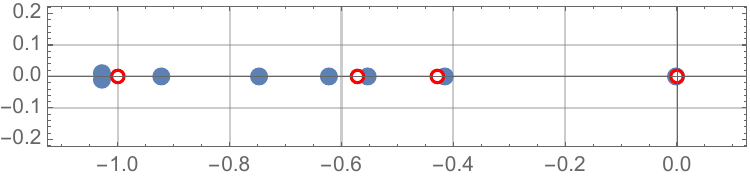}\includegraphics[width=4cm]{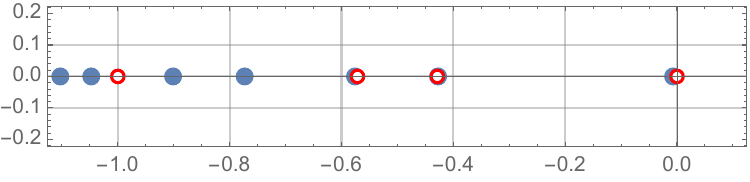}
\caption{\label{F2} Numerical computation of the spectrum of the operator $L$ defined by
$\textstyle L\,f=g_\star^{(m-2)/2}\mathcal L\big(g_\star^{(2-m)/2}\,f\big)$
for a smooth function $f$ supported in the rectangle $[-18,18]\times[-28,28]\ni(x,v)$ (left) and in the ellipse (right) with semi-axes $2*18/\sqrt\pi$ for $x$ and $2*28/\sqrt\pi$ for $v$, with zero Dirichlet boundary conditions and $m=0.8$. Here $L$ is defined on $\mathrm L^2(\Omega,dx\,dv)$ where~$\Omega$ is either the rectangle or the ellipse (both have same area). The spectrum of~$L$ is seen as an approximation of the spectrum of $\mathcal L$ on $\mathrm L^2(\R^d\times\R^d,g_\star^{m-2}\,dx\,dv)$, with dimension $d=1$, for the eigenvalues which are the closest to $0$. Local enlargements close to $\lambda=0$ are shown below the general figure of the spectrum. On the last row, the eigenvalues $-\,\lambda_i$ with $i=0$, $1$, $2$, $3$ are marked by red circles. The highest non-zero eigenvalue of $L$ is of the order of $-0.4152$ (left, rectangle) and $-0.4272$ (right, ellipse), to be compared with $-\,\lambda_1=-\,A\approx-0.4286$.
}
\end{center}
\vspace*{-0.5cm}
\end{figure}

\acknow{\!This work has been supported by the Project \emph{Conviviality} (ANR-23-CE40-0003) of the French National Research Agency.\\[4pt]
{\small\copyright\,\the\year\ by the authors. This paper may be reproduced, in its entirety, for non-commercial purposes. \href{https://creativecommons.org/licenses/by/4.0/legalcode}{CC-BY 4.0}}}
\showacknow

\section*{References}
\bibliography{BCD}
\end{document}